\author{Harshith Alagandala}
\date{\today}
\newtheorem{theorem}{Theorem}[section]
\newtheorem{lemma}[theorem]{Lemma}
\theoremstyle{definition}
\newtheorem{definition}[theorem]{Definition}
\newtheorem*{remark}{Remark}
\newtheorem{example}{Example}[theorem]
\newcommand{\C}{\mathbb{C}}
\newcommand{\R}{\mathbb{R}}
\newcommand{\norm}[1]{\left\|#1\right\|}
\renewcommand{\Re}{\text{Re}}
\renewcommand{\Im}{\text{Im}}
\title{Local polynomial convexity at hyperbolic CR-singularities in $M\subset \C^n$}
\begin{document}
\maketitle
\tableofcontents

\section{Introduction}

Let $M$ be a real $k$-dimensional smooth submanifold of $\C^n$.
The presence of CR-singularities is often governed by the topology of M:
not every compact surface has a totally real embedding in $\C^2$, e.g.,
a sphere embedded in $\C^2$ will have CR-singularities (\textcite{bishop1965differentiable}).
The structure of CR-singularities depends on Euler characteristic and Pontryagin numbers 
for a generically embedded manifold $M$ (\textcite{lai1972}, \textcite{webster1985}, \textcite{domrin1995}).
When $n < \lfloor \frac{3k}{2} \rfloor$, there are $k$-dimensional manifolds which do not have totally real embedding in $\C^n$ (\textcite{jacobowitz2012}).
Such a manifold will have CR-singularities where it fails to be totally real.

It is of interest to study polynomially convex embeddings of a manifold in $\C^n$.
There are global results for certain dimensions:
a non-maximal ($k<n$) totally real embedding of $M$ in $\C^n$ can be deformed via a small perturbation into a polynomially convex one. 
When $ \lfloor \frac{3k}{2} \rfloor \leq n$, any $k$-dimensional compact real manifold admits a totally real embedding into $\C^n$. 
Stronger bounds exist (\textcite{gupta-shafikov2020}, \cite{gupta-shafikov2020-odd}, \cite{gupta-shafikov2025}).
We are interested in the local polynomial convexity of~$M$. 
For a generic embedding $M^k \subset \C^n$ where $k<n$,
we will show in Section \ref{section_local_polyconvex_k_less_n} that
$M$ is locally polynomially convex at a generic CR-singular point $p \in M$ of order 1 (Definition \ref{def:order_of_CR_singularity}).
For $k>n$, if $M$ does not contain any proper submanifolds of the same CR dimension, 
then a family of analytic discs can be attached to $M$ which obstructs polynomial convexity (\textcite{tumanov1989}).
This paper will focus at the maximal case when $k=n$.
Then $M$ is locally polynomially convex at $p$ when the point is not a CR-singularity (as $M$ is totally real at $p$).
Can we asses the local polynomial convexity at CR-singularities?

When $k=n$ and $n \leq 7$, only order 1 CR-singularities may occur for a generic embedding (\textcite[Proposition 2.1]{webster1985}, \textcite{lai1972}).
When $n \geq 8$, higher order CR-singularities may exist, but order 1 CR-singularities will still be present.
Further, given an order 1 CR-singularity in~$M$, there exists a neighbourhood of that point where the CR-singularities of $M$ are of order~1.

In this paper, we assess the local polynomial convexity of $M^n \subset \C^n$ at order 1 CR-singularities. 
For this, we use the classification given by \textcite{bishop1965differentiable}:
let $M^n \subset \C^n$ be a smooth manifold with an order 1 CR-singularity at $0\in M$.
Then after a biholomorphic change of coordinates, $M$ can be parametrized near $0$ as
\begin{equation}
	\begin{split}
		z_1 &= t_1 + i f_1(t_1, \hdots, t_{n-2}, w, \bar w), \\
				&\vdots \\
		z_{n-2} &= t_{n-2} + i f_{n-2}(t_1, \hdots, t_{n-2}, w, \bar w), \\
		z_{n-1} &= w = u + iv, \\
		z_{n} &= w\bar w + \gamma (w^2 + \bar w^2) + F(t_1, \hdots, t_{n-2}, w, \bar w),
	\end{split}
	\label{eqn_bishop_normal_form}
\end{equation}
where  $(t_1, \hdots, t_{n-2}) \in [-T,T]^{n-2}$ for some $T>0$,
$w \in \overline{B_r(0)} \subset \C$ for some $r>0$,
$f_j:[-T,T]^{n-2} \times \overline{B_r(0)} \to \R$ are smooth functions that vanish to order two at zero for $1 \leq j \leq n-2$,
$F:[-T,T]^{n-2} \times \overline{B_r(0)} \to \C$ is a smooth function that vanishes to order three at zero,
and $\gamma \geq 0$.

Note that $\gamma$ is independent of the biholomorphic coordinate change chosen.
We will refer to equation (\ref{eqn_bishop_normal_form}) as the normal form of an order 1 CR-singularity.
An order 1 CR-singularity $p \in M$ can be classified as elliptic, parabolic or hyperbolic point.
We can bring $M$ to the normal form (\ref{eqn_bishop_normal_form}) centered at $p$.
If $\gamma < \frac{1}{2}$, then $p$ is elliptic.
If $\gamma = \frac{1}{2}$, then it is parabolic.
And if $\gamma > \frac{1}{2}$, then it is hyperbolic.
This classification can also be obtained without the use of a normal form (\textcite{webster1985}).

\textcite{bishop1965differentiable} has shown that $M^n\subset \C^n$ is not locally polynomially convex at an elliptic point
by attaching a family of analytic discs around the point.
For a generic embedding, the set of parabolic points is nowhere dense in the set of CR-singularities.
Further, existence of elliptic points near parabolic points will prevent local polynomial convexity of $M$ at parabolic points.
\textcite{forstnerivc1991new} have shown that for a two dimensional submanifold $M^2 \subset \C^2$,
$M$ is polynomially convex at hyperbolic complex points.
In this paper, we will look at local polynomial convexity at hyperbolic points of $M^n \subset \C^n$ when $n\geq 3$.

The first result will impose a condition on $F$ in equation (\ref{eqn_bishop_normal_form}).
\begin{theorem}
  \label{thm_hyperbolic_higher_dimension_vanish_order_two}
	Let $p$ be a hyperbolic point of $M^n \subset \C^n$. 
	Let equation (\ref{eqn_bishop_normal_form}) be the normal form of $M$ centered at $p$ ($p=0$).
	If $F(t,w,\bar w)$ vanishes to order two in $w$ (Definition \ref{defn:vanishes_order_two_in_w}), then $M$ is locally polynomial convex at $p$.
	Further, $\mathscr{P}(K) = \mathscr{C}(K)$ is satisfied on some compact neighbourhood $K \subset M$ of $p$.
\end{theorem}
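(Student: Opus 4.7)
The plan is to factor the defining quadratic of $z_n$ using hyperbolicity, split $M$ near $p$ into two closed pieces compatible with this factorization, and glue the pieces via Kallin's lemma; the hypothesis on $F$ enters in the gluing step to ensure the required separation survives the perturbation. Writing $w = u + iv$, the quadratic $|w|^2 + \gamma(w^2 + \bar w^2)$ equals $(1+2\gamma)u^2 - (2\gamma-1)v^2$, which is indefinite for $\gamma > 1/2$ and factors as $q(u,v) = L_+(u,v)\, L_-(u,v)$ with real linear forms $L_\pm(u,v) = \sqrt{1+2\gamma}\, u \pm \sqrt{2\gamma-1}\, v$. The assumption that $F$ vanishes to order 2 in $w$ gives $F = O(|w|^2)$ uniformly in $t$, so on $M$ we have $z_n = q(u,v) + F(t, w, \bar w)$ with $F$ dominated by $q$ away from the zero lines $L_\pm = 0$.

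Next, I would partition $M$ near $p$ into $M_+ = M \cap \{q(u,v) \geq 0\}$ and $M_- = M \cap \{q(u,v) \leq 0\}$. Each $M_\pm$ is a compact graph over the polynomially convex set $[-T,T]^{n-2} \times \{w : \pm q(u,v) \geq 0,\, |w| \leq r\}$ via the map $(t, w) \mapsto (t_1 + if_1, \ldots, t_{n-2} + if_{n-2}, w, z_n)$. For sufficiently small $T$ and $r$, each $M_\pm$ should be polynomially convex: off $p$ the piece is totally real, and at $p$ its tangent cone is a union of totally real half-spaces, so standard totally real approximation results combined with the graph structure should yield polynomial convexity.

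Then apply Kallin's lemma to $M_+ \cup M_-$ with separating polynomial $P(z) = z_n$. Since $\Re z_n \approx q$, the images $P(M_\pm)$ lie in approximately the right and left closed half-planes of $\C$ respectively, meeting only near the origin, and their union has polynomially convex hull equal to itself. Kallin then yields polynomial convexity of $M = M_+ \cup M_-$ on a compact neighborhood $K$ of $p$. The stronger conclusion $\mathscr{P}(K) = \mathscr{C}(K)$ follows from the O'Farrell--Preskenis--Walsh theorem: $M$ is totally real on $K \setminus \{p\}$, and $\{p\}$ is removable for polynomial approximation.

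The main obstacle I expect is the Kallin separation near the zero locus of $q$: there, $\Im F$ (also $O(|w|^2)$) could push $z_n$ out of the expected half-plane. Handling this will require either modifying $P$ by a polynomial correction absorbing the leading-order behavior of $\Im F$, or exploiting the order-2 vanishing of $F$ in $w$ to show $\Im F / \Re F \to 0$ uniformly as $(t, w) \to 0$ along $\{q = 0\}$. The polynomial-convexity step for each $M_\pm$ will also need care at the degenerate tangent cone at $p$, where the graph over the base set fails to be totally real; this is where the hypothesis on $F$ is again invoked to verify that the CR-singular point does not contribute new hull.
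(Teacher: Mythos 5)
Your decomposition of $M$ into $M_{\pm}=M\cap\{\pm q\geq 0\}$ does not reduce the difficulty: the polynomial convexity of each piece is left unproved, and no cited result applies. For $n\geq 3$ the CR-singular set of $M$ near $p$ is an $(n-2)$-dimensional set through $p$ (not the single point $p$), lying along the corner locus $\{w=0\}$ shared by both pieces; so neither $M_+$ nor $M_-$ is totally real away from $p$, and the closing appeal to O'Farrell--Preskenis--Walsh with ``$\{p\}$ removable'' is false. The graph description does not help either: the base $[-T,T]^{n-2}\times\{w:\pm q\geq 0,\ |w|\leq r\}$ has interior in the $w$-plane, $z_n$ depends on $\bar w$, so $z_n\notin\mathscr{P}$ of the base and the polynomially-convex-graph lemma is unavailable; and the tangent cone of $M_\pm$ at $p$ spans a plane containing a complex line, so no totally-real perturbation theorem applies there. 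The Kallin step is also broken as written: with $P=z_n$ the images of $M_\pm$ meet along the image of the $(n-1)$-dimensional set $M\cap\{q=0\}$, where $z_n=F(t,w,\bar w)$ is a complex number of arbitrary argument (you flag this but do not repair it), and even for $F\equiv 0$ the set $P^{-1}(0)\cap(M_+\cup M_-)$ is an $(n-1)$-dimensional set whose polynomial convexity is itself unproved. In short, the hardest parts of the theorem are exactly the steps you defer.

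The paper's proof avoids all of these issues by working upstairs rather than cutting $M$ downstairs: it pulls $M$ back under the proper holomorphic map $P(\zeta_0,\zeta_1,\zeta_2)=\bigl(\zeta_0,\ \zeta_1+i\zeta_2,\ \zeta_1^2+\zeta_2^2+2\gamma(\zeta_1^2-\zeta_2^2)\bigr)$ (for $n=3$). Solving the resulting quadratic for $\zeta_1$ --- this is precisely where the order-two vanishing of $F$ in $w$ is used, to make the square root single-valued and the correction term $f$ a $\mathscr{C}^1$-small function with $f/w\to 0$ --- exhibits $P^{-1}(M)$ as the union of two $\mathscr{C}^1$-small perturbations $V_1,V_2$ of honest totally real $n$-planes. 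Each is polynomially convex by Theorem \ref{thm_lipschitz_graph_general_polynomial_convexity}; Kallin's lemma with $Q=\epsilon(\zeta_1^2+\zeta_2^2)+i\zeta_1\zeta_2$ separates them with $Q^{-1}(0)\cap(K_1\cup K_2)$ a perturbed totally real $(n-2)$-plane; and Theorem \ref{thm_proper_holo_mapping} transfers polynomial convexity and $\mathscr{P}=\mathscr{C}$ back to $M$. To salvage your approach you would need an independent proof that each $M_\pm$ is polynomially convex, which is exactly the obstruction the paper's unfolding is designed to bypass.
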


The second result will impose a condition on $f_j$.
A hyperbolic point $p\in M$ will be called \emph{flat} if $f_j$ in the normal form (\ref{eqn_bishop_normal_form})
do not depend on $t_{j+1}, \hdots, t_{n-2}, w, \bar w$.
Equivalently $M$ can be parametrized centered at $p$ as
\begin{equation}
	\label{eqn_flat_normal_form}
	\begin{split}
		z_1 &= t_1 + i f_1(t_1), \\
		z_2 &= t_2 + i f_1(t_1, t_2), \\
				&\vdots \\
		z_{n-2} &= t_{n-2} + i f_{n-2}(t_1, \hdots, t_{n-2}), \\
		z_{n-1} &= w = u + iv, \\
		z_{n} &= w\bar w + \gamma (w^2 + \bar w^2) + F(t_1, \hdots, t_{n-2}, w, \bar w),
	\end{split}
\end{equation}
where
$(t_1, \hdots, t_{n-2}) \in [-T,T]^{n-2}$,
$w = u+i v \in \overline{B_r(0)}$,
$f_j:[-T,T]^{j} \to \R$ are smooth functions that vanish to order two at zero for $j=1,\hdots,n-2$,
$F:[-T,T]^{n-2} \times \overline{B_r(0)} \to \C$ is a smooth function that vanishes to order three at zero,
and $\gamma \geq \frac{1}{2}$.

\begin{theorem}(Flat hyperbolic point)
  \label{thm_flat_CR_singularity}
	Let $p$ be a hyperbolic point of $M^n \subset \C^n$ which is flat, i.e., $M$ can be represented
	locally at $p$ by the above equation (\ref{eqn_flat_normal_form}).
	Then $M$ is locally polynomially convex at $p$.
	Further, $\mathscr{P}(K) = \mathscr{C}(K)$ is satisfied on some compact neighbourhood $K \subset M$ of $p$.
\end{theorem}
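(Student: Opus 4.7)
The plan is to exploit the flat structure by projecting onto the first $n-2$ coordinates so as to obtain a totally real base, then reducing local polynomial convexity to the two-dimensional hyperbolic case of Forstneri\v{c}--Stout fiberwise. Let $\pi : \C^n \to \C^{n-2}$ be the projection $(z_1, \ldots, z_n)\mapsto (z_1, \ldots, z_{n-2})$, and set
\[
\phi(t) = \bigl(t_1 + if_1(t_1),\ t_2 + if_2(t_1,t_2),\ \ldots,\ t_{n-2} + if_{n-2}(t_1,\ldots,t_{n-2})\bigr).
\]
The flat hypothesis makes the real Jacobian of $\phi$ unit lower triangular at $0$, so $\phi$ is a local diffeomorphism onto its image $N$, and $T_0 N = \R^{n-2}\subset\C^{n-2}$ is totally real; by continuity, $N$ is totally real in some neighborhood of $0$. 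By the H\"ormander--Wermer theorem, after shrinking $T$, the compact set $K_N := \phi([-T,T]^{n-2})$ is polynomially convex and satisfies $\mathscr{P}(K_N) = \mathscr{C}(K_N)$.

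Next, fix $r > 0$ small and let $K := \{x \in M : \pi(x) \in K_N,\ |z_{n-1}(x)| \leq r\}$. Because $f_1, \ldots, f_{n-2}$ do not depend on $w$, for each $t$ the fiber $K_t := K \cap \pi^{-1}(\phi(t))$ lies inside the affine $2$-plane $\{\phi(t)\} \times \C^2$ and is the graph
\[
w\ \longmapsto\ \bigl(w,\ w\bar w + \gamma(w^2 + \bar w^2) + F(t,w,\bar w)\bigr),\qquad w \in \overline{B_r(0)}.
\]
This is a hyperbolic Bishop surface with a CR-singularity at $w = 0$, perturbed by the $C^\infty$ family $F(t,\cdot,\cdot)$ that vanishes to order three at the origin. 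Since $F$ and its derivatives depend continuously on the compact parameter $t \in [-T,T]^{n-2}$, Forstneri\v{c}--Stout applies with a common radius $r$, yielding that each $K_t$ is polynomially convex in its affine fiber and that $\mathscr{P}(K_t) = \mathscr{C}(K_t)$.

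To combine the base and fiber information, take $p \in \hat K$. Since $\pi$ is polynomial, $\pi(\hat K) \subset \widehat{\pi(K)} = K_N$, so $\pi(p) = \phi(t_0)$ for some $t_0 \in [-T,T]^{n-2}$. For any polynomial $P(z_{n-1}, z_n)$ and any $\epsilon > 0$, use $\mathscr{P}(K_N) = \mathscr{C}(K_N)$ to choose a polynomial $Q(z_1, \ldots, z_{n-2})$ with $Q(\phi(t_0))$ close to $1$, $\|Q\|_{K_N} \leq 1 + \epsilon$, and $|Q| \leq \epsilon$ outside a chosen neighborhood $U$ of $\phi(t_0)$ in $K_N$. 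Then $QP$ is a polynomial on $\C^n$, and by continuity of $P$ on fibers near $K_{t_0}$ together with the hull inequality at $p$,
\[
|Q(\phi(t_0))|\,|P(p)| \;=\; |(QP)(p)| \;\leq\; \|QP\|_K \;\longrightarrow\; \|P\|_{K_{t_0}}
\]
as $U \searrow \{\phi(t_0)\}$ and $\epsilon \to 0$. Hence $|P(p)| \leq \|P\|_{K_{t_0}}$ for every polynomial $P$ in $(z_{n-1}, z_n)$, so $p$ lies in the polynomial hull of $K_{t_0}$ inside its affine fiber, which equals $K_{t_0}$ by the previous paragraph. Thus $p \in K$ and $\hat K = K$. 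The identity $\mathscr{P}(K) = \mathscr{C}(K)$ then follows by the same localization: given $f \in \mathscr{C}(K)$, approximate $f$ on each fiber $K_t$ by polynomials in $(z_{n-1}, z_n)$ via $\mathscr{P}(K_t) = \mathscr{C}(K_t)$, patch finitely many such approximants by a continuous partition of unity on $K_N$, and finally replace the partition coefficients by polynomials in $(z_1, \ldots, z_{n-2})$ using $\mathscr{P}(K_N) = \mathscr{C}(K_N)$.

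The main obstacle is the uniform version of Forstneri\v{c}--Stout needed in the second paragraph: one must know that polynomial convexity of the hyperbolic slices holds on a \emph{common} radius $r$ across all $t \in [-T,T]^{n-2}$, not merely one $t$ at a time. I expect this to emerge from inspecting the Forstneri\v{c}--Stout construction---typically a strictly plurisubharmonic local defining function, or a Kallin-type splitting via the essentially real coordinate $z_n$, both of which are stable under $C^k$-small perturbations of the cubic remainder---but checking that the quantitative estimates depend only on finitely many uniform norms of $F$ in $t$ is the delicate step. A secondary technical point is that the peaking polynomial $Q$ on the totally real base can be taken to concentrate sharply at $\phi(t_0)$; this is standard once $\mathscr{P}(K_N) = \mathscr{C}(K_N)$, with boundary values of $t_0$ handled by slightly enlarging the parameter box.
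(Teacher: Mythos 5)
Your overall strategy --- reduce to the two-dimensional hyperbolic slices sitting over a totally real, polynomially convex base, then reassemble --- matches the paper's, but your reassembly step is genuinely different: the paper peels off one real parameter at a time and applies the fibration theorem (Theorem \ref{thm_fibre_theorem}, with $\R$ replaced by the arc $t_j \mapsto t_j + i f_j$) inductively on the dimension of the slice, whereas you project onto all $n-2$ base coordinates at once and run a peak-function/partition-of-unity argument over $K_N$. Your version of that step is sound (it is in effect the proof of the fibration theorem redone for a higher-dimensional totally real base with $\mathscr{P}=\mathscr{C}$), so the difference there is one of packaging rather than substance.

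The genuine gap is exactly where you place your ``main obstacle,'' and it is larger than you suggest. First, your description of the fibers is not correct for $t \neq 0$: $F$ vanishes to order three only at $(t,w)=(0,0)$ jointly, so for fixed small $t \neq 0$ the function $F(t,\cdot,\cdot)$ has nonzero constant and linear terms in $w,\bar w$; consequently the CR-singular point of the slice $K_t$ sits at some $w=\eta(t)\neq 0$, and the Bishop invariant of the slice at that point is a $t$-dependent quantity $\gamma_t$, not $\gamma$. Before any two-dimensional theorem can be applied one must recentre at $\eta(t)$, remove the holomorphic linear term, rotate and rescale to reach Bishop normal form; this is the content of Section \ref{section:flat-hyperbolic-point} of the paper. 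Second, the Forstneri\v{c}--Stout result you invoke is qualitative: it provides \emph{some} polynomially convex neighbourhood of a hyperbolic point with no control on its size, so it cannot by itself deliver a common radius $r$ valid for all $t\in[-T,T]^{n-2}$. The paper closes precisely this gap by proving Theorem \ref{thm_polynomial_convexity_two_dim_with_C2_condition}, a quantitative version in which the admissible radius is governed by the explicit $\mathscr{C}^2$-bound (\ref{eqn_theorem-C2-bound-main}) depending only on the Bishop invariant; uniformity in $t$ then follows from continuity of $\gamma_t$ and of the normalized remainder $\hat G_t$. Without such a quantitative statement (or a careful tracking of constants through the Forstneri\v{c}--Stout construction), the second paragraph of your argument does not go through.
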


For the proof of the above theorem, we will prove a result of independent interest.
\textcite{forstnerivc1991new} assert the local polynomial convexity of $M^2\subset\C^2$ at hyperbolic points.
We will give a quantitative size for a polynomially convex neighbourhood of a hyperbolic point in $M^2\subset \C^2$
that depends on the normal form (\ref{eqn_defining_equation_of_M_2_in_eqn_form}).

\begin{theorem}
	\label{thm_polynomial_convexity_two_dim_with_C2_condition}
	Let $M \subset \C^2$ be a manifold given by
	\begin{equation}
		z_2 = z_1 \bar z_1 + \gamma (z_1^2 + \bar z_1^2) + F(z_1, \bar z_1),
		\label{eqn_defining_equation_of_M_2_in_eqn_form}
	\end{equation}
	where $\gamma > \frac{1}{2}$,
	$z_1 \in \overline{D_R(0)} \subset \C$,
	and a smooth function $F: \overline{D_R(0)} \to \C$ 
	that vanishes to order three at $0\in M$.

  Choose $r\in(0,R]$ such that
  \begin{equation}
    \label{eqn_theorem-C2-bound-main}
    \norm{F}_{\mathscr{C}^2(\overline{B_r(0)})} 
    \leq \frac{(2\gamma-1)^3}{2^{14} \gamma^3}.
  \end{equation}
	Define $M_r =  M \cap (\overline{B_r(0)} \times \C )$.
  Then $M_r$ is polynomially convex and satisfies
	$ \mathscr{P}(M_r) = \mathscr{C}(M_r) $.
\end{theorem}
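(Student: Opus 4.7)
The plan is to apply Kallin's separating-polynomial lemma with the polynomial $P(z_1,z_2) = z_2$. The hyperbolicity $\gamma > 1/2$ is what makes this feasible: the quadratic form $Q(w, \bar w) := |w|^2 + \gamma(w^2 + \bar w^2) = (1+2\gamma)(\Re w)^2 - (2\gamma-1)(\Im w)^2$ has signature $(1,1)$, so $Q^{-1}(0)$ is a pair of transverse real lines through $0$ cutting $\overline{B_R(0)}$ into four sectors. On the unperturbed model this decomposes $M_0$ into four closed totally real wedges meeting only at the CR-singularity.

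The first concrete step is to convert the hypothesis $\norm{F}_{\mathscr{C}^2(\overline{B_r(0)})} \leq \varepsilon := (2\gamma-1)^3/(2^{14}\gamma^3)$ into pointwise estimates. Since $F$ vanishes to order three at $0$, we get $|F(w)| \leq \tfrac{1}{2}\varepsilon |w|^2$ along with bounds on $\partial F, \bar\partial F$ that are small against $(2\gamma-1)$; together they show that $z_2 = Q + F$ is a $\mathscr{C}^1$-small perturbation of $Q$. In particular $\Gamma := M_r \cap \{\Re z_2 = 0\}$ remains a pair of smooth arcs through $0$, and the four-wedge decomposition $M_r = W_1 \cup W_2 \cup W_3 \cup W_4$ persists.

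Grouping the opposite wedges, set $A = W_1 \cup W_3$ (on which $\Re z_2 \geq 0$) and $B = W_2 \cup W_4$ (on which $\Re z_2 \leq 0$). Under $P = z_2$, $P(A)$ lies near the non-negative real axis, displaced vertically only by the error $\Im F$, and symmetrically for $P(B)$. The threshold $\varepsilon$ is chosen precisely so that $P(A) \cap P(B) \subset \{0\}$; the cubic factor $(2\gamma-1)^3$ reflects the need to dominate the contributions of $F$, $\partial F$, and $\partial^2 F$ simultaneously against the hyperbolic strength, while the $\gamma^3$ in the denominator tracks the leading coefficient $(1+2\gamma)$ of the quadratic form. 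To apply Kallin's lemma it remains to check that each of $A, B$ is polynomially convex and that $P^{-1}(0) \cap M_r$ is polynomially convex. The latter is standard, since $P^{-1}(0) \cap M_r$ is a small perturbation of a pair of arcs meeting transversally at $0$. For the former, I would show polynomial convexity of a single wedge $W_i$ by a fibered argument using $P = z_2$, whose fibers on $W_i$ are single arcs of perturbed hyperbolas, then combine opposite wedges $W_1, W_3$ via a second, inner application of Kallin using the auxiliary polynomial $z_1$, which separates them into opposite half-planes of $\C$.

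The hardest step is this inner Kallin combining $W_1$ and $W_3$, because the image $z_1(W_1) \cup z_1(W_3)$ is a union of two opposite sectors in $\C$ whose complement can be disconnected, and one must exploit the specific opening angles (governed by $\gamma$) together with the tightness of the perturbation bound to verify Kallin's polynomial-convexity hypothesis on the image. Once polynomial convexity of $M_r$ is established, the identity $\mathscr{P}(M_r) = \mathscr{C}(M_r)$ follows from $M_r$ being totally real away from the isolated CR-singularity, via the theorem of O'Farrell-Preskenis-Walsh.
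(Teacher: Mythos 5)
Your wedge decomposition runs into two concrete obstructions. First, the outer application of Kallin's lemma with $P=z_2$ cannot work as described: the sets $A=W_1\cup W_3$ and $B=W_2\cup W_4$ share the boundary curves $\Gamma=\{\Re z_2=0\}\cap M_r$, and on $\Gamma$ one has $z_2=i\,\Im F$, which is purely imaginary but generically nonzero. Hence $P(A)\cap P(B)\supset i\,\Im F(\Gamma)\neq\{0\}$, and $P(A)$ is not contained in an open half-plane together with $\{0\}$ as Theorem \ref{thm_kallin_lemma} requires; no smallness assumption on $\norm{F}_{\mathscr{C}^2}$ repairs this, because the obstruction is the mere nonvanishing of $\Im F$ along $\Gamma$, not its size. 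Second, the polynomial convexity of a single wedge $W_i$ by fibering over $z_2$ requires, by Theorem \ref{thm_fibre_theorem} and the remark following it, that $z_2$ take values in $\R$ or in an arc on $W_i$; this holds in the model case $F\equiv 0$, where $z_2=Q$ is real-valued, but for general $F$ the image $z_2(W_i)$ is a genuinely two-dimensional subset of $\C$ and the fibration theorem does not apply. (The inner Kallin step combining $W_1$ and $W_3$ via $z_1$ is in fact the unproblematic part: the two opposite sectors have opening angle $2\arctan\sqrt{(2\gamma+1)/(2\gamma-1)}<\pi$, so $\pm\Re z_1>0$ there except at the vertex.) The concluding appeal to O'Farrell--Preskenis--Walsh for $\mathscr{P}(M_r)=\mathscr{C}(M_r)$ would be fine once polynomial convexity were known, but the two gaps above are essential.

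For contrast, the paper sidesteps both difficulties by never decomposing $M_r$ downstairs: it pulls $M$ back by the proper map $(z_1,z_2)\mapsto(z_1,\,z_1z_2+\gamma(z_1^2+z_2^2))$, so that the preimage splits globally into two sets $S_1,S_2$ that are Lipschitz-small perturbations of totally real planes, each polynomially convex by Theorem \ref{thm_lipschitz_graph_general_polynomial_convexity}; Kallin's lemma is then applied once, with the quadratic $\frac14(z_1^2-z_2^2)+\frac{2\gamma-1}{16\gamma^2}z_1z_2$, whose real part is bounded below by $c|\zeta|^2$ on $S_1$ and above by $-c|\zeta|^2$ on $S_2$. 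The explicit constant $(2\gamma-1)^3/(2^{14}\gamma^3)$ is consumed precisely in making the Lipschitz constants of the two sheets and the error terms in that separation estimate small enough. If you wish to keep a downstairs argument, you would need separating polynomials adapted to the perturbed wedge boundaries, which are not algebraic curves; I do not see how to complete your route without effectively reintroducing the double cover.
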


It is evident that the size of the neighbourhood decreases as $\gamma$ approaches $\frac{1}{2}$.
Conversely, if $F \equiv 0$, then the size of the neighbourhood can be arbitrarily large.

Theorem \ref{thm_hyperbolic_higher_dimension_vanish_order_two} and Theorem \ref{thm_flat_CR_singularity}
do not cover all possible hyperbolic points.
The following is an example of a hyperbolic point that does not satisfy the condition for both theorems:
set $\gamma > \frac{1}{2}$ and
consider the manifold $N^3 \subset \C^3$ given by the parametric equation:
\begin{equation*}
	\begin{split}
		z_1 =& t+ i(w^2 + \bar w^2), \\
		z_2 =& w, \\
		z_3 =& w\bar w + \gamma(w^2 + \bar w^2) + t^2 (\bar w + w), \\
	\end{split}
\end{equation*}
where $t\in [-T,T]$ and $w \in \overline{B_r(0)}$ where $T,r>0$.
This manifold can also be represented as the solution to equations:
\begin{equation}
	\begin{split}
		y_1 =& z_2^2 + \bar z_2^2, \\
		z_3 =& z_2\bar z_2 + \gamma(z_2^2 + \bar z_2^2) + x_1^2 (\bar z_2 + z_2), \\
		y_3 =& 0. \\
	\end{split}
\end{equation}
The point $0\in N$ is a hyperbolic point. We do not know whether $N$ is locally polynomially convex at $0$.

In Section \ref{section_background}, we will recall machinery to assess polynomial convexity,
review CR-singularities, and prove some technical inequalities.
In Section \ref{section_klessn}, we will investigate the local polynomial convexity of $M^k \subset \C^n$ for $k<n$
by making use of the \textcite{coffman2006} normal form.
Then we proceed to prove Theorem \ref{thm_hyperbolic_higher_dimension_vanish_order_two} in Section \ref{section_proof_of_thn_vanish_order_two}.
In Section \ref{section_hyperbolic_m2_c2}, we will give a quatitative estimate on the size of 
polynomially convex neighbourhood at a hyperbolic point of $M^2 \subset \C^2$ (Theorem \ref{thm_polynomial_convexity_two_dim_with_C2_condition}).
Finally, in Section \ref{section:flat-hyperbolic-point}, we will prove local polynomial convexity at flat hyperbolic points (Theorem \ref{thm_flat_CR_singularity}).

I would like to express my deepest gratitude to Professor Rasul Shafikov, whose guidance, knowledge, and support were instrumental in completing this research.

\section{Background}
\label{section_background}

In this section, we gather background material on polynomial convexity, CR-singularities, and prove some technical inequalities.

\subsection{Polynomial Convexity}

\begin{definition}[Polynomial Convexity]
	A compact subset $K \subset \C^n$ is \emph{polynomially convex} if
	for any point $w\in \C^n \setminus K$, there exists a (holomorphic)
	polynomial $P:\C^n \to \C$ such that 
	\[
		|P(w)|>\sup_{z\in K}|P(z)|.
	\]
\end{definition}

Let $\mathscr{C}(K)$ denote the set of continuous functions on a compact subset $K\subset \C^n$.
This set forms a Banach algebra when equipped with the norm $\norm{P}_K := \sup_{z\in K} |P(z)|$.
Define $\mathscr{P}(K)$ as the uniform closure in $\mathscr{C}(K)$ of the set of polynomials restricted to $K$.
The following theorem relates function algebras to polynomial convexity.

\begin{theorem}[{\cite[Theorem 1.2.10]{stout2007polynomial}}]
	A compact subset $K \subset \C^n$ is polynomially convex if $\mathscr{C}(K) = \mathscr{P}(K)$.
\end{theorem}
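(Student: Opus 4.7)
My plan is to prove this via Gelfand theory for commutative unital Banach algebras: identify the maximal ideal space of $\mathscr{P}(K)$ with the polynomial hull $\widehat{K}$, identify the maximal ideal space of $\mathscr{C}(K)$ with $K$ itself via point evaluation, and then read off $\widehat{K}=K$ from the hypothesis $\mathscr{P}(K)=\mathscr{C}(K)$.

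The first identification is standard: for a compact Hausdorff space $K$, every character of $\mathscr{C}(K)$ is a point evaluation $\mathrm{ev}_z$ for some $z\in K$. This is the classical recovery of $K$ from its algebra of continuous functions, which one proves using Urysohn's lemma and a partition-of-unity argument to show that any proper closed ideal must have a common zero. The second identification is the heart of the argument. Given a character $\varphi$ of $\mathscr{P}(K)$, multiplicativity forces $\varphi(P)=P(w_\varphi)$ for every polynomial $P$, where $w_\varphi:=(\varphi(z_1),\dots,\varphi(z_n))\in\C^n$. Since characters of a unital Banach algebra are contractive, $|P(w_\varphi)|\le\|P\|_K$ for every polynomial $P$, placing $w_\varphi$ in $\widehat{K}$ by definition of the polynomial hull. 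Conversely, for any $w\in\widehat{K}$ the evaluation $P\mapsto P(w)$ is bounded by $\|\cdot\|_K$ on polynomials and is multiplicative, so it extends by density to a character of $\mathscr{P}(K)$.

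Now the hypothesis $\mathscr{P}(K)=\mathscr{C}(K)$ is an equality of Banach subalgebras of $\mathscr{C}(K)$, both carrying the sup-norm on $K$, so their Gelfand spectra coincide. Combining the two identifications gives $K=\widehat{K}$, which is the definition of polynomial convexity. The only real content lies in the identification of the spectrum of $\mathscr{P}(K)$ with $\widehat{K}$; everything else is bookkeeping, and since the statement is quoted from Stout I would simply cite it in the final write-up rather than reproduce the standard Banach-algebra argument.
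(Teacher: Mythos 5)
Your argument is correct: identifying the spectrum of $\mathscr{P}(K)$ with $\widehat{K}$ and the spectrum of $\mathscr{C}(K)$ with $K$ via point evaluations, and then reading off $\widehat{K}\subseteq K$ from the equality of the two algebras, is the standard proof of this fact. The paper itself offers no proof here --- it simply cites \cite[Theorem 1.2.10]{stout2007polynomial} --- and your Gelfand-theoretic route is essentially the one found in that reference, so there is nothing to flag beyond noting that for this implication you only need the surjection from characters of $\mathscr{P}(K)$ onto $\widehat{K}$ together with the fact that every character of $\mathscr{C}(K)$ is evaluation at a point of $K$; the full homeomorphism of spectra is not required.
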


\begin{definition}[Local Polynomial Convexity]
	A compact subset $K$ is \emph{locally polynomially convex} at $p$ if there exists a compact polynomially convex neighbourhood $U$ of $p$ in $K$.
\end{definition}


\begin{lemma}
	A compact set $K$ is locally polynomially convex at $p$, if and only if, there exists a polynomially convex neighbourhood basis of $p$ in $K$.
\end{lemma}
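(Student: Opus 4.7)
The plan is to reduce the equivalence to the standard fact that the intersection of two polynomially convex compact sets is again polynomially convex. The reverse direction is immediate from the definition of local polynomial convexity, since any neighbourhood basis of polynomially convex sets supplies at least one compact polynomially convex neighbourhood of $p$ in $K$. All of the work lies in the forward direction.

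For the forward direction I would fix one compact polynomially convex neighbourhood $U$ of $p$ in $K$, whose existence is guaranteed by local polynomial convexity, and then, given an arbitrary open neighbourhood $V$ of $p$ in $K$, construct a compact polynomially convex neighbourhood of $p$ contained in $V$. The natural candidate is $W_\rho := U \cap \overline{B_\rho(p)}$, where $\rho>0$ is chosen small enough that $K \cap \overline{B_\rho(p)} \subseteq V$; such a $\rho$ exists because $K \setminus V$ is closed and does not contain $p$. Letting $\rho$ range over a sequence tending to $0$ will then produce the required basis.

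Three things have to be checked about $W_\rho$: that it is compact (clear, as an intersection of two compacts), that it is a neighbourhood of $p$ in $K$ (it contains $U^\circ \cap B_\rho(p)$, where $U^\circ$ is any relatively open subset of $K$ inside $U$ that contains $p$), and that it is polynomially convex. For the last point I would invoke two standard facts: closed Euclidean balls in $\C^n$ are polynomially convex, and for any compact $A, B \subset \C^n$ one has $\widehat{A \cap B} \subseteq \widehat{A} \cap \widehat{B}$ by monotonicity of the polynomial hull under inclusion. When both $A$ and $B$ coincide with their hulls, this forces $\widehat{A \cap B} \subseteq A \cap B$, so $A \cap B$ is polynomially convex; applying this with $A = U$ and $B = \overline{B_\rho(p)}$ settles the claim. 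There is no real obstacle to this argument beyond citing these two well-known facts.
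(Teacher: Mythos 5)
Your argument is correct and is essentially the paper's own (one-line) proof: intersect a fixed compact polynomially convex neighbourhood with arbitrarily small closed balls and use that the intersection of two polynomially convex compacts is polynomially convex via monotonicity of the hull. You are in fact slightly more careful than the paper, which writes ``intersection of $K$ with small closed balls'' where the intersection should be taken with the polynomially convex neighbourhood $U$, exactly as you do.
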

\begin{proof}
	$(\implies)$
	Take the intersection of $K$ with sufficiently small closed balls.
	$(\impliedby)$ Follows by definition of local polynomial convexity.
\end{proof}

\begin{remark}
	If there exists a compact neighbourhood $K$ of $p$ in $M$ such that $\mathscr{C}(K) = \mathscr{P}(K)$,
	then $M$ is locally polynomially convex at $p$.
\end{remark}

The polynomial convexity of a set can be determined by the polynomial convexity of
slices formed by taking fibers over a real-value function:
\begin{theorem}[{\cite[Theorem 1.2.16]{stout2007polynomial}}]
	\label{thm_fibre_theorem}
	If $X \subset \C^n$ is compact and if $\mathscr{P}(X)$ contains a real-valued function $f$,
	then $X$ is polynomially convex if and only if each fiber $f^{-1}(t),\; t\in\R$, is polynomially convex.
	If $X$ is polynomially convex, then $\mathscr{P}(X) = \mathscr{C}(X)$ if and only if
	$\mathscr{P}(f^{-1}(t)) = \mathscr{C}(f^{-1}(t))$ for each $t \in \R$.
\end{theorem}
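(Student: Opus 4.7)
The plan is to treat the two biconditionals separately, with the key structural observation underpinning everything being that a real-valued $f\in\mathscr{P}(X)$ extends to a real-valued continuous function $\widetilde f$ on the polynomial hull $\widehat X$, so that fibres behave well under passage to hulls. Concretely, if $P_n\to f$ uniformly on $X$, then $\|P_n-P_m\|_{\widehat X}=\|P_n-P_m\|_X\to 0$, so $P_n$ converges uniformly on $\widehat X$ to a continuous extension $\widetilde f$; moreover $P_n(\widehat X)\subset\widehat{P_n(X)}$ (a standard property of polynomial maps), and since $f(X)\subset\R$ is compact, its one-variable polynomial hull equals itself, which forces $\widetilde f(\widehat X)\subset f(X)\subset\R$ in the limit.

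For part (1), the direction ``$X$ polynomially convex $\Rightarrow$ each fibre polynomially convex'' is the easier one: given $w\notin f^{-1}(t_0)$, either $w\notin X$, in which case polynomial convexity of $X$ gives a separating polynomial, or $w\in X$ with $f(w)\neq t_0$, in which case $P_n-t_0$ is small on $f^{-1}(t_0)$ but stays bounded away from zero at $w$ for $n$ large. For the converse, assume each fibre is polynomially convex, take $w\in\widehat X$, and let $t_0=\widetilde f(w)\in\R$. I would show $w\in\widehat{f^{-1}(t_0)}$ as follows: for any polynomial $Q$ and any $\epsilon,\delta>0$, use Weierstrass/Runge to pick a univariate polynomial $h$ with $h(t_0)=1$, $|h|\le\epsilon$ on $\{t\in f(X):|t-t_0|\ge\delta\}$, and $|h|$ uniformly bounded on a complex neighbourhood of $f(X)$; apply the hull inequality to the polynomial $z\mapsto Q(z)h(P_n(z))$, let $n\to\infty$, then $\delta\to 0$, then $\epsilon\to 0$, obtaining $|Q(w)|\le C\|Q\|_{f^{-1}(t_0)}$ for a constant $C$ independent of $Q$; replacing $Q$ by $Q^N$ and taking $N$-th roots removes $C$. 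Since $f^{-1}(t_0)$ is polynomially convex, $w\in f^{-1}(t_0)\subset X$, and $X$ is polynomially convex.

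For part (2), assume throughout that $X$ (hence every fibre) is polynomially convex. The forward direction is routine: a continuous function on a fibre extends to $X$ by the Tietze extension theorem, is then uniformly approximated by polynomials under the hypothesis $\mathscr{P}(X)=\mathscr{C}(X)$, and restriction preserves the approximation. For the converse, fix $g\in\mathscr{C}(X)$ and $\eta>0$. Using uniform continuity of $g$, cover the compact set $f(X)\subset\R$ by finitely many open intervals $I_i$ so small that $g$ varies by less than $\eta$ on each $f^{-1}(I_i)$; pick a continuous partition of unity $\{\phi_i\}$ on $\R$ subordinate to $\{I_i\}$, and, for each $i$, a point $t_i\in I_i$ and a polynomial $P_i$ with $\|P_i-g\|_{f^{-1}(t_i)}<\eta$. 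Because $f\in\mathscr{P}(X)$ and $\mathscr{P}(X)$ is a closed subalgebra, every polynomial in $f$ lies in $\mathscr{P}(X)$; approximating $\phi_i$ uniformly on $f(X)$ by one-variable polynomials shows $\phi_i(f)\in\mathscr{P}(X)$, whence $\sum_i \phi_i(f)P_i\in\mathscr{P}(X)$, and a triangle-inequality estimate shows this sum approximates $g$ on $X$ to within a constant multiple of $\eta$.

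The step I expect to be the main obstacle is the peaking-polynomial argument in the hard direction of part (1): one must control $h\circ P_n$ not merely on $f(X)\subset\R$ but on the complex images $P_n(X)$, which drift slightly off $\R$, and one must eliminate the stray multiplicative constant $C$. This is precisely why the bound on a complex neighbourhood of $f(X)$ and the $N$-th power trick are both needed. Once that piece is in hand, the reduction of global polynomial convexity (and the function algebra identity) to fibrewise data follows from essentially formal manipulations.
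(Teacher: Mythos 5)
The paper does not prove this statement: it is imported verbatim from Stout (Theorem 1.2.16 of \emph{Polynomial Convexity}), so there is no internal proof to compare yours against. Judged on its own, your part (1) is essentially the standard argument and is sound: the extension $\widetilde f$ to $\widehat X$ with $\widetilde f(\widehat X)\subset f(X)\subset\R$ is correct, and the peaking argument in the hard direction works. It can be streamlined by peaking inside the algebra rather than through the approximants $P_n$: take $h(t)=1-(t-t_0)^2/c$ with $c=\|f-t_0\|_X^2$, so that $h\circ f\in\mathscr{P}(X)$ satisfies $0\le h\circ f\le 1$ on $X$, equals $1$ exactly on $f^{-1}(t_0)$, and is at most $1-\delta^2/c<1$ off $K_\delta:=f^{-1}([t_0-\delta,t_0+\delta])$; then $|Q(w)|=|\chi_w(Q\,(h\circ f)^N)|\le\max\bigl(\|Q\|_{K_\delta},\,(1-\delta^2/c)^N\|Q\|_X\bigr)$, and letting $N\to\infty$ and then $\delta\to 0$ gives $|Q(w)|\le\|Q\|_{f^{-1}(t_0)}$ with no stray constant and no need to control $h$ on a complex neighbourhood of $f(X)$.

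The genuine gap is in the converse of part (2). You propose to cover $f(X)$ by intervals $I_i$ ``so small that $g$ varies by less than $\eta$ on each $f^{-1}(I_i)$,'' citing uniform continuity of $g$. This is impossible in general: $f^{-1}(I_i)$ contains the whole fiber $f^{-1}(t_i)$ no matter how small $I_i$ is, and $g$ may oscillate by a fixed amount on a single fiber --- which is precisely the situation the theorem is designed to handle. Uniform continuity of $g$ controls oscillation over metrically small subsets of $X$, not over preimages of small intervals under $f$. The repair is to reverse the order of choices: for each $t\in f(X)$ first pick a polynomial $P_t$ with $\|P_t-g\|_{f^{-1}(t)}<\eta$ (using $\mathscr{P}(f^{-1}(t))=\mathscr{C}(f^{-1}(t))$); the set $\{x\in X:|P_t(x)-g(x)|<2\eta\}$ is then open in $X$ and contains the compact fiber $f^{-1}(t)$, so by compactness it contains $f^{-1}(J_t)$ for some open interval $J_t\ni t$. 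Covering $f(X)$ by finitely many such $J_{t_i}$ and running your partition-of-unity construction with $\sum_i\phi_i(f)P_{t_i}$ then gives the required approximation of $g$ on $X$. With that correction the rest of part (2), including the forward direction via Tietze extension, is fine.
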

\begin{remark}
The set $\R$ can be replaced by any arc in $\C$.
\end{remark}

We know that a compact subset $K$ of $\R^n \subset \C^n$ is polynomially convex
and satisfies $\mathscr{P}(K) = \mathscr{C}(K)$. The same holds if $\R^n$ is replaced by a totally real plane.
The following theorem determinates the polynomial convexity of perturbations of a totally real plane:
\begin{theorem}[{\cite[Corollary 1.6.12]{stout2007polynomial}}]
	\label{thm_lipschitz_graph_general_polynomial_convexity}
	Let $X$ be a compact subset of $\R^n$, let $R=(R_1, ..., R_n): X \to \C^n$
	be a map that satisfies the Lipschitz condition $|R(z) - R(z')| < c|z - z'|$
	for all $z,z'\in X$ and some fixed $c\in [0,1)$.
	Consider the set $X_R$ defined as \[
		X_R = \{
			(z_1, ..., z_n, \bar z_1 + R_1(z), ..., \bar z_n + R_n(z)): z\in X
		\}.
	\]
	Then $\mathscr{P}(X_R) = \mathscr{C}(X_R)$, and $X_R$ is polynomially convex in $\C^{2n}$.
\end{theorem}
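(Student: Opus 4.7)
The plan is to reduce both assertions to the standard fact that $\mathscr{P}(X)=\mathscr{C}(X)$ for a compact $X\subset\R^n\subset\C^n$, by pulling back along the projection onto the first $n$ complex coordinates. The starting observation is that since $X\subset\R^n$, we have $\bar z=z$ for every $z\in X$, so
\[
    X_R = \{(x,\, x + R(x)) : x \in X\} \subset \R^n \times \C^n \subset \C^{2n}.
\]
Let $\pi\colon\C^{2n}\to\C^n$ denote the projection onto the first $n$ coordinates. Its restriction $\pi|_{X_R}\colon X_R\to X$ is a continuous bijection with continuous inverse $x\mapsto(x, x+R(x))$, hence a homeomorphism of compact Hausdorff spaces.

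The first step is to establish $\mathscr{P}(X)=\mathscr{C}(X)$: since $X\subset\R^n$, the coordinate functions $z_1,\ldots,z_n$ restrict to real-valued polynomials that separate points of $X$, and the algebra they generate contains $1$ and is closed under complex conjugation on $X$, so Stone--Weierstrass yields the identity. The second step lifts this approximation to $X_R$. Given any $f\in\mathscr{C}(X_R)$, transfer it through the homeomorphism to $\tilde f:=f\circ(\pi|_{X_R})^{-1}\in\mathscr{C}(X)$, choose polynomials $p_\varepsilon(z_1,\ldots,z_n)$ with $\norm{\tilde f-p_\varepsilon}_X<\varepsilon$, and view each $p_\varepsilon$ as a polynomial on $\C^{2n}$ that is independent of the last $n$ coordinates. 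Then $\norm{f-p_\varepsilon}_{X_R}=\norm{\tilde f-p_\varepsilon}_X<\varepsilon$, so $f\in\mathscr{P}(X_R)$, whence $\mathscr{P}(X_R)=\mathscr{C}(X_R)$. Polynomial convexity of $X_R$ then follows immediately from the implication $\mathscr{P}(K)=\mathscr{C}(K)\Rightarrow K$ polynomially convex, i.e.\ the theorem of Stout quoted earlier in the excerpt.

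The point worth flagging is that the Lipschitz hypothesis $c<1$ plays essentially no role in the reduction above: with $X$ already lying in $\R^n$, the total realness of the base carries all of the content and $R$ need only be continuous. The bound $c<1$ becomes essential in the more general form of the corollary, in which $X$ is allowed to be an arbitrary compact subset of $\C^n$; there it is precisely what guarantees that $X_R$ stays totally real at its smooth points, and one must run a Berndtsson- or Stolzenberg-type approximation rather than the naive projection argument used here. The main ``obstacle'' is therefore conceptual: convincing oneself that, in the formulation actually stated, the Lipschitz assumption is vestigial and the result is a direct consequence of the total realness of $\R^n\subset\C^n$ together with the graph structure of $X_R$.
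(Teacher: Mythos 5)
Your reduction is internally sound for the statement exactly as printed: if $X\subset\R^n$ then $\bar z=z$ on $X$, the first $n$ coordinates alone separate points of $X_R$, Stone--Weierstrass gives $\mathscr{P}(X)=\mathscr{C}(X)$, and pulling back through the projection yields $\mathscr{P}(X_R)=\mathscr{C}(X_R)$ and hence polynomial convexity. You are also right that, read literally, the Lipschitz hypothesis does no work. Note that the paper offers no proof here --- the result is quoted from Stout --- so the comparison has to be made against the intended content rather than an internal argument.

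The difficulty is that the printed hypothesis ``$X$ a compact subset of $\R^n$'' is a typo for ``$X$ a compact subset of $\C^n$'', which is what Stout's Corollary 1.6.12 actually asserts and what this paper actually uses: in Section 5 the theorem is invoked with $n=1$ and $X=\overline{B_r(0)}\subset\C$, the full closed disc, to conclude that $S_1=\{(\zeta,\bar\zeta+f(\zeta,\bar\zeta))\}$ is polynomially convex; a literal reading with $X\subset\R$ could not cover that application. For such $X$ your projection argument collapses at the first step: $\bar z\neq z$ on $X$, the polynomials in $z_1,\dots,z_n$ alone are not conjugation-closed on $X$, and $\mathscr{P}(X)$ need not equal $\mathscr{C}(X)$ (for the closed disc it is the disc algebra). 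The content of the corollary is precisely that the antidiagonal $\{(z,\bar z):z\in\C^n\}$ is a maximal totally real subspace of $\C^{2n}$, that the hypothesis $c<1$ keeps the graph $X_R$ a Lipschitz-small perturbation of it, and that one then applies the H\"ormander--Wermer/Weinstock-type approximation theorem for such graphs rather than Stone--Weierstrass. You identified this dichotomy correctly in your closing remark, but the version you proved is the degenerate one, so the argument as written does not establish the result in the form the paper needs.
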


\begin{example}[Totally real manifold]
Let $M \subset \C^n$ be a smooth $k$-dimensional manifold for $k<n$.
If $M$ is totally real at $p$, then $M$ is locally polynomially convex at $p$ \cite[Corollary 1.6.15]{stout2007polynomial}.
There exists a compact neighbourhood $K \subset M$ of $p$ which satisfies $\mathscr{C}(K) = \mathscr{P}(K)$.
\end{example}

Arbitrary intersection of polynomially convex sets is polynomially convex.
On the other hand, the polynomial convexity of the union of polynomial convex sets is a difficult question.
We will use the following version of Kallin's lemma {\cite{kallin_lemma}, \cite[Theorem 1.6.19]{stout2007polynomial}}:
\begin{theorem}[Kallin's lemma]
	\label{thm_kallin_lemma}
	Let $K_1$ and $K_2$ be compact polynomially convex sets in $\C^n$.
	Let $P: \C^n \to \C$ be a polynomial such that \[
		P(K_1) \subset \{x+iy \in \C: y > 0 \} \cup \{(0,0)\},
	\] 
	\[
		P(K_2) \subset \{x+iy \in \C: y < 0 \} \cup \{(0,0)\}.
	\]
	If $P^{-1}(0) \cap (K_1 \cup K_2)$ is polynomially convex in $\C^n$,
	then $K_1 \cup K_2$ is polynomially convex.
	Further, if $\mathscr{P}(K_1) = \mathscr{C}(K_1)$ and $\mathscr{P}(K_2) = \mathscr{C}(K_2)$,
	then $\mathscr{P}(K_1\cup K_2) = \mathscr{C}(K_1 \cup K_2)$.
\end{theorem}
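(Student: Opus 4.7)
The plan is to use $P$ as a separating ``axis'': the half-plane hypothesis forces $P^2(X) \subset \C \setminus (0, \infty)$ where $X := K_1 \cup K_2$, so a single-valued square root of $P^2$ lies in $\mathscr{P}(X)$ via Runge approximation, and from it I construct partial-idempotent elements of $\mathscr{P}(X)$ that encode the two pieces and allow separate treatment of $K_1$ and $K_2$.

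First, $P(K_1) \cup P(K_2) \subset \{\Im > 0\} \cup \{\Im < 0\} \cup \{0\}$ gives $P^2(X) \subset \Omega := \C \setminus (0,\infty)$, which is simply connected and therefore carries a holomorphic branch $\sigma : \Omega \to \{w \in \C : \Im w \geq 0\}$ of $\sqrt{\cdot}$ with $\sigma(0) = 0$. The composition $g := \sigma \circ P^2$ is continuous on $X$, equals $+P$ on $K_1$ and $-P$ on $K_2$, and the two formulas agree on $K_1 \cap K_2 \subset P^{-1}(0)$ where both vanish. The polynomial hull of $P^2(X)$ in $\C$ is $P^2(X)$ together with the bounded components of its complement, all of which remain inside $\Omega$; Runge's theorem then provides polynomials $q_k \to \sigma$ uniformly on this hull, hence $q_k \circ P^2 \to g$ uniformly on $X$ and $g \in \mathscr{P}(X)$. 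Setting $E_1 := (g + P)/2$ and $E_2 := (P - g)/2$ yields elements of $\mathscr{P}(X)$ satisfying $E_1 + E_2 = P$, the partial-orthogonality $E_1 E_2 = (P^2 - g^2)/4 \equiv 0$ on $X$, $E_i|_{K_i} = P|_{K_i}$, and $E_i|_{K_{3-i}} \equiv 0$.

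For polynomial convexity of $X$, take $z_0 \in \widehat X$; since $P^2(z_0) \in \widehat{P^2(X)} \subset \Omega$, either $P(z_0) = 0$ or $\Im P(z_0) \neq 0$. When $\Im P(z_0) > 0$, the character at $z_0$ sends $g$ to $\sigma(P(z_0)^2) = +P(z_0)$ and hence $E_2$ to $0$; combined with $E_2|_{K_1} \equiv 0$ and polynomial convexity of $K_1$, this is used to force $z_0 \in K_1$. The symmetric case $\Im P(z_0) < 0$ gives $z_0 \in K_2$, and the degenerate case $P(z_0) = 0$ is handled by the hypothesis that $P^{-1}(0) \cap X$ is polynomially convex, via factorization of the character through the seam algebra. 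The equality $\mathscr{P}(X) = \mathscr{C}(X)$ then follows by decomposing $\phi \in \mathscr{C}(X)$ into its $K_1$- and $K_2$-restrictions, approximating each by polynomials via $\mathscr{P}(K_i) = \mathscr{C}(K_i)$, and recombining through the partial characteristic structure carried by $E_1, E_2$, the matching on the seam $K_1 \cap K_2 \subset P^{-1}(0)$ being enforced by continuity of $\phi$.

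The main obstacle is the seam $P^{-1}(0) \cap X$, and more precisely the passage from the algebraic vanishing ($\chi(E_2) = 0$, or $\chi(P) = 0$) to the geometric inclusion $z_0 \in X$. Away from the seam, the $E_i$ act as ``characteristic-function-of-$K_i$ times $P$'' multipliers in $\mathscr{P}(X)$ and both arguments decouple cleanly into independent statements for $K_1$ and $K_2$; but on the seam $E_1$ and $E_2$ vanish simultaneously, so the character extensions alone yield no information about hull points lying over $0$, and the algebra gluing loses any approximation distinction between the two sides. Converting algebraic consistency into geometric inclusion---especially in the degenerate case where $\chi(P) = 0$---is precisely where the polynomial convexity hypothesis on $P^{-1}(0) \cap X$ enters essentially: it ensures the spectrum of the seam algebra equals $P^{-1}(0) \cap X$ itself, which both enables the character factorization needed for polynomial convexity and underwrites the matching of $K_1$- and $K_2$-approximations that closes the algebra-equality argument.
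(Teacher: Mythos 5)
The paper does not prove this statement: it is quoted verbatim from the literature (Kallin; Stout, Theorem 1.6.19), so there is no in-paper proof to compare against. Your outline does follow the classical proof strategy for this version of Kallin's lemma --- the branch $\sigma$ of $\sqrt{\cdot}$ on the slit plane, the element $g=\sigma\circ P^2\in\mathscr{P}(X)$ equal to $P$ on $K_1$ and $-P$ on $K_2$, and the relations $E_1+E_2=P$, $E_1E_2=0$ --- and that skeleton is sound. One technical slip: $\Omega=\C\setminus(0,\infty)$ is not open at $0$, so $\sigma$ is holomorphic only on $\C\setminus[0,\infty)$ and merely continuous at $0$; since $0$ typically lies in $\widehat{P^2(X)}$, Runge's theorem does not apply and you need Mergelyan's theorem (continuous on the hull, holomorphic on its interior, connected complement). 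That is repairable.

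The genuine gaps are the two localization steps, which you assert rather than prove. First, from $\widehat{E_1}(z_0)=P(z_0)\neq 0$ and $E_1|_{K_2}\equiv 0$ you must extract $z_0\in K_1$; the fact you invoke, $\widehat{E_2}(z_0)=0$, gives nothing by itself, since vanishing of a Gelfand transform at a hull point carries no geometric information. The standard mechanism is the iteration $|s(z_0)|^n\,|\widehat{E_1}(z_0)|=|\widehat{s^nE_1}(z_0)|\leq\norm{s^nE_1}_X=\norm{s^nE_1}_{K_1}\leq\norm{s}_{K_1}^n\norm{E_1}_{K_1}$ for every polynomial $s$, followed by $n$-th roots and $n\to\infty$, yielding $|s(z_0)|\leq\norm{s}_{K_1}$ and hence $z_0\in\widehat{K_1}=K_1$. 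Second, the case $P(z_0)=0$ is the crux of the lemma, and ``factorization of the character through the seam algebra'' is precisely what has to be proved: you need $|s(z_0)|\leq\norm{s}_{Z}$ for $Z=P^{-1}(0)\cap X$ and every polynomial $s$, and the polynomial convexity of $Z$ (i.e., the seam algebra having spectrum $Z$) does not by itself produce that inequality. A working route: $\Im g\geq 0$ on $X$ with equality exactly on $Z$, and $\widehat{g}(z_0)=0$, so $h_\lambda:=e^{i\lambda g}\in\mathscr{P}(X)$ satisfies $\norm{h_\lambda}_X\leq 1$, $\widehat{h_\lambda}(z_0)=1$, and $h_\lambda\to 0$ uniformly on $\{\Im g\geq\delta\}$; then $|s(z_0)|\leq\limsup_\lambda\norm{sh_\lambda}_X\leq\norm{s}_{\{\Im g\leq\delta\}}\to\norm{s}_Z$. (Rossi's local maximum modulus principle is the usual alternative.) The concluding claim $\mathscr{P}(X)=\mathscr{C}(X)$ has the same defect; the standard argument is via annihilating measures: if $\mu\perp\mathscr{P}(X)$, then $sE_1\in\mathscr{P}(X)$ forces $(P\,d\mu)|_{K_1}$ to annihilate all polynomials, hence to vanish because $\mathscr{P}(K_1)=\mathscr{C}(K_1)$, and similarly on $K_2$, so $\mu$ is concentrated on $Z$, where one finishes using the same hypotheses. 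Without these three arguments the proposal is an outline of the right proof, not a proof.
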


Under a holomorphic map $F:\C^n \to \C^n$,
the pre-image of a polynomially convex
set is polynomially convex if the pre-image is compact: 
let $K \subset \C^n$ be polynomially convex.
If $x \in \C^n \setminus F^{-1}(K)$, then $F(x) \in \C^n \setminus K$, and
we have a polynomial $P$ such that $P(F(x))=1$ and $\norm{P}_K < 1$.
The function $P\circ F$ is entire and $P\circ F(x) = 1$ with $\norm{P\circ F}_{F^{-1}(K)} < 1$.
Hence, $F^{-1}(K)$ is polynomially convex.
In addition, if the map is proper, we have \cite[Theorem 1.6.24]{stout2007polynomial}:

\begin{theorem}[Proper holomorphic mapping]
  \label{thm_proper_holo_mapping}
Let $F:\C^n \to \C^n$ be a holomorphic map that is proper. Then $K$ is polynomially convex
if and only if $F^{-1}(K)$ is polynomially convex,
and $\mathscr{P}(K) = \mathscr{C}(K)$ if and only if
$\mathscr{P}(F^{-1}(K)) = \mathscr{C}(F^{-1}(K))$. 
\end{theorem}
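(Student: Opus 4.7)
The forward implication ($K$ polynomially convex $\Rightarrow$ $F^{-1}(K)$ polynomially convex) is exactly the pull-back argument sketched in the paragraph immediately preceding the theorem, so the real content is the converse together with the two directions of the function-algebra equivalence. The key structural input I will use is that a proper holomorphic self-map of $\C^n$ is necessarily a finite polynomial map: each component of $F$ is a polynomial, and $F$ is a branched covering of some degree $d$. As a consequence, $\C[z_1,\ldots,z_n]$ is a finitely generated module over $F^{*}\C[w_1,\ldots,w_n]$, and for any polynomial $Q(z)$ the elementary symmetric functions of the values of $Q$ on a fiber $F^{-1}(w)$ are themselves polynomials in $w$ (up to sign they are the coefficients of the characteristic polynomial of multiplication by $Q$ on this module). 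I will write
\[
	N_Q(w)=\prod_{z\in F^{-1}(w)}Q(z), \qquad
	T_Q(w)=\tfrac{1}{d}\sum_{z\in F^{-1}(w)}Q(z),
\]
for the resulting norm and trace polynomials, with the product and sum taken with multiplicity.

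For the converse of polynomial convexity, suppose $F^{-1}(K)$ is polynomially convex and let $w_0 \in \C^n \setminus K$. The fiber $F^{-1}(w_0)=\{z_1,\ldots,z_m\}$ is finite and disjoint from $F^{-1}(K)$, and the union $F^{-1}(K)\cup\{z_1,\ldots,z_m\}$ is still polynomially convex. A standard separation argument on this union yields a polynomial $Q$ with $Q(z_j)=1$ for every $j$ and $\norm{Q}_{F^{-1}(K)}<\varepsilon$ for any preassigned $\varepsilon\in(0,1)$. Then $|N_Q(w_0)|=1$ while $\norm{N_Q}_K\leq\varepsilon^d$, so $w_0 \notin \widehat{K}$, and $K$ is polynomially convex.

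For the function-algebra statements, the implication $\mathscr{P}(F^{-1}(K))=\mathscr{C}(F^{-1}(K))\Rightarrow\mathscr{P}(K)=\mathscr{C}(K)$ goes by averaging: any $g\in\mathscr{C}(K)$ lifts to $g\circ F\in\mathscr{C}(F^{-1}(K))$ which is the uniform limit of polynomials $Q_n(z)$, and the trace polynomials $T_{Q_n}(w)$ then converge uniformly to $g$ on $K$ since $Q_n(z)\to g(F(z))$ uniformly on each fiber. For the reverse direction, assume $\mathscr{P}(K)=\mathscr{C}(K)$. Then $\overline{w_j}\in\mathscr{P}(K)$, and pulling back along $F$ shows $\overline{F_j(z)}\in\mathscr{P}(F^{-1}(K))$; consequently the $2n$ real-valued functions $\Re F_j,\Im F_j$ all lie in $\mathscr{P}(F^{-1}(K))$. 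Iterated application of Theorem \ref{thm_fibre_theorem} with these functions reduces the desired equality $\mathscr{P}(F^{-1}(K))=\mathscr{C}(F^{-1}(K))$ to the corresponding equality on every fiber $F^{-1}(w)$, which is a finite set and hence satisfies it trivially.

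The main obstacle is the polynomiality of $N_Q$ and $T_Q$; this rests on the fact that proper holomorphic self-maps of $\C^n$ are finite polynomial maps making $\C[z]$ a finite module over $F^{*}\C[w]$, so that norms and traces of polynomial elements are again polynomials. Once this symmetrization machinery is in place, the separation argument on $F^{-1}(K)$ and the iterated fiber-theorem reduction on the base are both routine.
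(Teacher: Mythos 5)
The paper offers no proof of this statement at all: it is quoted directly as \cite[Theorem 1.6.24]{stout2007polynomial}, and the only argument actually supplied in the text is the elementary pull-back direction sketched in the paragraph before the theorem (which needs only that $F^{-1}(K)$ is compact, not properness). So your proposal is not being measured against an internal proof but against the standard one in Stout, and your architecture --- finite branched covering, norm and trace of a function over the fibers to go from $F^{-1}(K)$ down to $K$, a separation argument on $F^{-1}(K)$ augmented by one finite fiber, and the iterated fiber theorem (Theorem \ref{thm_fibre_theorem}) to go from $K$ up to $F^{-1}(K)$ --- is essentially that standard route, and all four implications are addressed.

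However, the assertion you flag as ``the key structural input'' is false: a proper holomorphic self-map of $\C^n$ need \emph{not} be a polynomial map when $n\geq 2$. The shear $F(z_1,z_2)=(z_1,\,z_2+e^{z_1})$ is an automorphism of $\C^2$, hence proper, and its second component is not a polynomial; consequently $\C[z]$ is not a finite module over $F^{*}\C[w]$, and $N_Q$, $T_Q$ are not polynomials in general. What is true, and what your argument actually needs, is weaker: the fibers of a proper holomorphic map $\C^n\to\C^n$ are compact analytic subsets of $\C^n$, hence finite, so $F$ is a finite branched covering of some degree $d$, and the symmetric functions of $Q$ over the fibers are locally bounded holomorphic functions off the critical values, hence \emph{entire} by the Riemann extension theorem. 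Since every step of your proof takes place uniformly on a compact set (separating $w_0$ from $K$ inside a large closed ball, approximating $g$ on $K$ by $T_{Q_n}$, pulling $\bar w_j$ back to $F^{-1}(K)$ via $P_k\circ F$), each entire function occurring may be replaced by a Taylor polynomial approximating it uniformly there, and the argument goes through verbatim with ``polynomial'' weakened to ``entire'' for $N_Q$, $T_Q$ and $P_k\circ F$. With that repair the proposal is a correct proof; as written, the central lemma it rests on is wrong.
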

A map is called proper if the pre-image of a compact set is compact.
In particular, polynomial convexity is preserved under 
biholomorphic maps from $\C^n$ to $\C^n$.

\subsection{CR-singularities}

Let $M$ be a real $k$-dimensional smooth submanifold of $\C^n$.
At a point $p \in M$, we study the structure of the tangent space $T_pM \subset T_p\C^n$.
Let $J$ be the almost complex structure on $T_p\C^n$ and define the vector space
\[
	H_pM := \{ X \in T_pM : J(X) \in T_pM \} = T_pM \cap JT_pM.
\]
The almost complex structure $J$ turns $H_pM$ into a complex vector space.
Then $H_pM$ is the maximal complex vector space that is a subset of $T_pM$. 
We have $\dim_\R T_pM = k$ and $\dim_\R JT_pM = k$.
By the pigeon-hole principle, $\dim_\C H_pM \geq \max\{k-n, 0\}$.

\begin{definition}
	The \emph{CR-dimension} of $M$ at $p$ is defined as $\dim_\C H_pM$.
	The manifold $M\subset \C^n$ is called a CR-submanifold
	if $p \mapsto \dim_\C H_pM$ is constant on $M$.
	The manifold $M$ is said to be \emph{totally real at a point} $p$ if $\dim_\C H_pM = 0$.
	The manifold $M$ is called \emph{totally real} if $M$ is totally real at $p$ for all $p\in M$.
\end{definition}

For a generic embedding $M \subset \C^n$
and a generic point $p\in M$, the CR-dimension is equal to $\max\{k-n,0\}$.
When $n=k$, the CR-dimension of $M$ is zero at a generic point.

\begin{definition}
	\label{def:order_of_CR_singularity}
	A point $p \in M$ is a CR-singularity of order $\mu \in \mathbb{Z}^+$ if 
	\[
		\dim_\C H_pM = \max\{k-n,0\} + \mu.
	\]
\end{definition}
For a generically embedded $M$ and $k \leq n$, the set of CR-singularities of order $\mu$ given by $\{p\in M: \text{CR-dim}_p M = \mu \}$
forms a (non-closed) submanifold of codimension $2\mu(n-k+\mu)$ in $M$ \cite{domrin1995}.

\subsection{Preliminary inequalities}

Similar to the Bernoulli's inequality for real numbers, we require the following for complex numbers:
\begin{lemma}
  \label{lemma_sqrt_inequality}
	For the principle branch of square root,
	$$
		\left| 1 - \sqrt{1+z} \right| \leq \frac{10}{11} |z|
	$$
	when $|z|<\frac{1}{4}$.
\end{lemma}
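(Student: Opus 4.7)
The plan is to rationalize the numerator, which reduces the lemma to a lower bound on the denominator $|1 + \sqrt{1+z}|$. Multiplying the expression $1 - \sqrt{1+z}$ by the conjugate $1 + \sqrt{1+z}$ in both numerator and denominator yields
\[
1 - \sqrt{1+z} \;=\; \frac{(1-\sqrt{1+z})(1+\sqrt{1+z})}{1+\sqrt{1+z}} \;=\; \frac{-z}{1+\sqrt{1+z}}.
\]
This identity is valid on the disc $\{|z| < 1/4\}$ because there $\Re(1+z) > 3/4 > 0$, so the principal branch $\sqrt{1+z}$ is holomorphic and has strictly positive real part, which in particular prevents the denominator from vanishing. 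It therefore suffices to establish $|1 + \sqrt{1+z}| \geq 11/10$ on this disc.

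For the lower bound, write $\sqrt{1+z} = a + ib$ with $a \geq 0$. The standard formula for the real part of a principal square root gives
\[
a \;=\; \sqrt{\tfrac{1}{2}\bigl(|1+z| + \Re(1+z)\bigr)}.
\]
When $|z| < 1/4$, both $|1+z|$ and $\Re(1+z)$ are bounded below by $1 - |z| > 3/4$, hence $a > \sqrt{3}/2$. Therefore
\[
|1 + \sqrt{1+z}| \;\geq\; \Re\!\left(1 + \sqrt{1+z}\right) \;=\; 1 + a \;>\; 1 + \tfrac{\sqrt{3}}{2} \;>\; \tfrac{11}{10},
\]
and substituting into the rationalized identity yields $|1 - \sqrt{1+z}| = |z|/|1+\sqrt{1+z}| \leq \tfrac{10}{11}|z|$, as claimed.

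There is no serious obstacle in this argument: the only care needed is remaining on the principal branch and ensuring the denominator is non-vanishing, both of which follow from $\Re(1+z) > 0$ on the disc $\{|z| < 1/4\}$. In fact the constants $10/11$ and $1/4$ in the statement are far from sharp; the same method yields the stronger inequality $|1 - \sqrt{1+z}| \leq 2|z|/(2+\sqrt{3})$ on this disc, but the stated bound is evidently sufficient and has cleaner constants for the estimates used later in the paper.
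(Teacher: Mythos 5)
Your proof is correct. It rests on the same algebraic identity as the paper's: writing $q(z)=1-\sqrt{1+z}$, the paper uses $z=q(z)(q(z)-2)$, which is exactly your rationalization $1-\sqrt{1+z}=-z/(1+\sqrt{1+z})$ since $q(z)-2=-(1+\sqrt{1+z})$. Where the two arguments part ways is in how the denominator $|1+\sqrt{1+z}|$ is bounded below by $11/10$. The paper does this indirectly: it first proves the a priori bound $|q(z)|<9/10$ by passing to polar coordinates, estimating $|1+z|$ and $\cos(\phi/2)$ separately, and computing $|1-\sqrt{1+z}|^2=1+|1+z|-2\sqrt{|1+z|}\cos(\phi/2)<4/5$; it then uses $|q(z)-2|\geq 2-|q(z)|\geq 11/10$. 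You instead bound the denominator directly via $|1+\sqrt{1+z}|\geq 1+\Re\sqrt{1+z}$ together with the closed-form expression $\Re\sqrt{w}=\sqrt{\tfrac{1}{2}(|w|+\Re w)}$ for the principal branch, which gives $\Re\sqrt{1+z}>\sqrt{3}/2$ in one line. Your route is shorter, avoids the polar-coordinate bookkeeping entirely, and, as you note, yields the sharper constant $2/(2+\sqrt{3})=4-2\sqrt{3}\approx 0.54$ in place of $10/11$; the paper's route has the minor virtue of producing the intermediate bound $|q(z)|<9/10$ explicitly, but that bound is not used elsewhere. Either argument suffices for the applications in Section 5.
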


\begin{proof}
  Let $z \in \C$ such that $|z| < \frac{1}{4}$, 
  and 
	$
		q(z) =  1 - \sqrt{1+z} 
	$.
	Then $ z = q(z) (q(z)-2) $.

	Let $z = r e^{i\theta} = r (\cos(\theta) + i \sin(\theta))$ where $0\leq r \leq 1/4$ and $\theta \in (-\pi, \pi]$.
  Then $ 1+z = (1+r \cos(\theta)) + i (r \sin(\theta))$.
	Write $1+z$ in polar coordinates as $ 1+z = |1+z| e^{i\phi}= |1+z| (\cos(\phi) + i \sin(\phi))$ where $\phi \in (-\pi, \pi]$.

	We can compute $|1+z|^2 = | (1+r \cos(\theta)) + i (r \sin(\theta)) |^2 = \sqrt{1+r^2 + 2r \cos(\theta)}$.
	This yields the inequality
	\begin{equation}
		\label{eqn_inequality_norm_1+z}
			3/4 < (1-r) \leq |1+z| \leq (1+r) \leq 5/4.
	\end{equation}
	We get $$
	\phi = \arctan \left( \frac{r\sin(\theta)}{1+r\cos(\theta)} \right)
	= \arccos \left( \frac{1+r\cos(\theta)}{\sqrt{1+r^2+2r\cos(\theta)}} \right),
	$$
	and $\sqrt{1+z} =\sqrt{|1+z|} (\cos(\phi/2) + i \sin(\phi/2))$.
	As $r \leq 1/4$, we can show
	\begin{equation}
		\label{eqn_inequality_phi_2}
	\cos(\phi/2) \geq \sqrt{\frac{4}{5}}.
	\end{equation}

	We simplify $\left|1-\sqrt{1+z} \right|^2 = 1 + |1+z| - 2\sqrt{|1+z|}\cos(\phi/2)$.

	From equation (\ref{eqn_inequality_norm_1+z}) and (\ref{eqn_inequality_phi_2}), we have
	\begin{gather*}
		\left|1-\sqrt{1+z} \right|^2  < \frac{4}{5}.
  \end{gather*}
  Hence,
  $ |q(z)| < 9/10$.

	Using this inequality with $z = q(z) (q(z)-2)$:
	$$
	|q(z)| \leq \frac{10}{11} |z|
	$$
	when $|z| \leq \frac{1}{4}$.

\end{proof}

In then following lemmas, $F_w$ and $F_{\bar w}$ denote the partial derivatives of $F$ with respect to $w$ and $\bar w$ respectively.

\begin{lemma}
  \label{lemma_C2_inequality}
	If $F: D_r(0) \to \C$ is a smooth function, then
	$$
	\left| \frac{F_{w}(w')}{w'} \right| \leq 4 \norm{F}_{\mathscr{C}^2}; \quad
	\left| \frac{F_{\bar w}(w')}{w'} \right| \leq 4 \norm{F}_{\mathscr{C}^2}; \quad
	\left| \frac{F(w')}{(w')^2} \right| \leq 8 \norm{F}_{\mathscr{C}^2}
	$$
	for $w'\in D_r(0)$.
\end{lemma}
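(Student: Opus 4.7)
The approach is to apply Taylor's theorem with integral remainder along rays emanating from the origin. The three estimates implicitly require $F$ to vanish to sufficient order at $0$ (namely $F_w(0)=F_{\bar w}(0)=0$ for the first two, and $F(0)=F_w(0)=F_{\bar w}(0)=0$ for the third), which is the natural setting inherited from the normal form (\ref{eqn_bishop_normal_form}), in which $F$ vanishes to order three.

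The preliminary step is to translate Wirtinger into real derivatives: from $F_w = \tfrac{1}{2}(F_u - iF_v)$ and the analogous identities, the second-order Wirtinger derivatives $F_{ww}, F_{w\bar w}, F_{\bar w\bar w}$ become linear combinations of $F_{uu}, F_{uv}, F_{vv}$ with small fixed coefficients, hence pointwise controlled by $\norm{F}_{\mathscr{C}^2}$. For the derivative estimates, since $F_w(0)=0$, the fundamental theorem of calculus applied to $t\mapsto F_w(tw')$ gives
\[
	F_w(w') = \int_0^1 \bigl(w'\, F_{ww}(tw') + \overline{w'}\, F_{w\bar w}(tw')\bigr)\,dt,
\]
and bounding the integrand by a constant times $|w'|\,\norm{F}_{\mathscr{C}^2}$, then dividing by $w'$, yields the first inequality (the stated constant $4$ has comfortable slack to absorb the $\mathscr{C}^2$-normalisation convention). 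The second inequality follows identically by symmetry, swapping the roles of $F_w$ and $F_{\bar w}$.

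For the third inequality I would invoke the second-order Taylor formula
\[
	F(w') = F(0) + w' F_w(0) + \overline{w'}\, F_{\bar w}(0) + \int_0^1 (1-t)\,\tfrac{d^2}{dt^2}F(tw')\,dt,
\]
in which the first three terms vanish by assumption. Expanding the remainder via the chain rule produces an integrand equal to $(w')^2 F_{ww}(tw') + 2|w'|^2 F_{w\bar w}(tw') + \overline{w'}^{\,2} F_{\bar w\bar w}(tw')$; combined with $\int_0^1(1-t)\,dt = \tfrac{1}{2}$ and the pointwise bounds on the second Wirtinger derivatives, this gives $|F(w')| \leq C |w'|^2\,\norm{F}_{\mathscr{C}^2}$ for some absolute $C$, with $C=8$ a safe choice. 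The only genuine difficulty is bookkeeping the numerical constants under whatever normalisation the paper adopts for $\norm{\cdot}_{\mathscr{C}^2}$; the structural content is entirely the Taylor expansion, and every other step is routine estimation.
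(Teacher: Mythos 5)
Your proposal is correct and takes essentially the same route as the paper: the paper applies the mean value theorem to the real and imaginary parts of $F$ and $F_x$, $F_y$ along the segment from $0$ to $w'$ and bounds the resulting second-order terms by $\norm{F}_{\mathscr{C}^2}$, which is just the mean-value form of your Taylor-with-integral-remainder argument. You are also right that the estimates silently require $F(0)=F_w(0)=F_{\bar w}(0)=0$ (the paper's proof uses this via the ``$-\,0$'' terms without stating it), which holds in every application since $F$ there vanishes to order three.
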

\begin{proof}
  By the Mean Value Theorem for real-valued function $\Re F_x$, we have a point $w_o$ on the line segment
  joining $w'$ and $0$ such that
  \[
    \Re F_{x}(w') - 0 = \Re F_{xx}(w_o) \Re(w' - 0) + \Re F_{xy}(w_o) \Im(w'-0),
  \]
  \[
    |\Re F_{x}(w')|  \leq |F_{xx}(w_o)| |w'| + |F_{xy}(w_o)| |w'| \leq 2 \norm{F}_{\mathscr{C}^2} |w'|.
  \]
  Similarly, we get
  \[
    |\Im F_{x}(w')| \leq 2 \norm{F}_{\mathscr{C}^2} |w'|.
  \]
  Hence,
  \[
    |F_{x}(w')| = |\Im F_{x}(w')| + |\Re F_{x}(w')| \leq 4 \norm{F}_{\mathscr{C}^2} |w'|
  \]
  and
  \[
    |F_{w}(w') | \leq \frac{1}{2}|F_{x}(w')| + \frac{1}{2}|F_{y}(w')|\leq 4\norm{F}_{\mathscr{C}^2} |w'|.
  \]
  Similarly,
  \[
    |F_{\bar w}(w') | \leq 4\norm{F}_{\mathscr{C}^2} |w'|.
  \]

  For the final inequality, apply then Mean Value Theorem on the real function $\Re F$:
  we have a point $v$ on the line segment joining $w'$ and $0$ such that
  \[
    \Re F(w') - 0 = \Re F_x(v) \Re(w' - 0) + \Re F_y(v) \Im(w' - 0).
  \]
  By taking the absolute value, we get
  \[
    |\Re F(w')| \leq |\Re F_x(v)| |w'| + |\Re F_y(v)| |w'|.
  \]
  As shown before, $|\Re F_x(v)| \leq 2 \norm{F}_{\mathscr{C}^2} |v|$.
  Since $v$ is in the line segment joining $w'$ and $0$, we have $|v|\leq |w'|$ and
  \[
    |\Re F(w')| \leq 4 \norm{F}_{\mathscr{C}^2} |w'|^2.
  \]
  Hence,
  \[
    |F(w')| = |\Re F(w')| + |\Im F(w')|  \leq 8 \norm{F}_{\mathscr{C}^2} |w'|^2.
  \]

\end{proof}

\begin{lemma}
  \label{lemma_complex_meanvalue_theorem_lipschitz}
  Let $f:D_r(0) \to \C$ be a $\mathscr{C}^1$-smooth function and $r,k>0$.
If $|f_w(w_o)| \leq k$ and $|f_{\bar w}(w_o)| \leq k$ for all $w_o \in D_r(0)$,
  then \[|f(w') - f(w'')| \leq 8k |w' - w''|\]
  for all $w',w'' \in D_r(0)$.
\end{lemma}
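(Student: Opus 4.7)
The plan is to reduce to the one-variable real mean value theorem by splitting $f$ into real and imaginary parts and converting the Wirtinger hypotheses into bounds on the real partial derivatives $f_x$ and $f_y$. The only geometric ingredient needed is that $D_r(0)$ is convex, so the straight segment from $w''$ to $w'$ stays inside the domain.

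First I would write $f = u + iv$ with $u,v:D_r(0)\to\R$ smooth, and use the identities $f_x = f_w + f_{\bar w}$ and $f_y = i(f_w - f_{\bar w})$. Combined with the hypothesis $|f_w|,|f_{\bar w}|\leq k$ pointwise, these give $|f_x(w_o)|,|f_y(w_o)|\leq 2k$ on $D_r(0)$, and hence $|u_x|,|u_y|,|v_x|,|v_y|\leq 2k$ pointwise, since each of these real partials is the real or imaginary part of $f_x$ or $f_y$.

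Next, because $D_r(0)$ is convex, the path $\gamma(t) = w'' + t(w'-w'')$ lies in $D_r(0)$ for all $t\in[0,1]$. Applying the classical mean value theorem to the real-valued function $t\mapsto u(\gamma(t))$ produces some $t_o\in(0,1)$ with
\begin{equation*}
u(w') - u(w'') = u_x(\gamma(t_o))\,\Re(w'-w'') + u_y(\gamma(t_o))\,\Im(w'-w''),
\end{equation*}
so $|u(w') - u(w'')| \leq (|u_x(\gamma(t_o))| + |u_y(\gamma(t_o))|)|w'-w''| \leq 4k|w'-w''|$. The same argument for $v$ gives $|v(w') - v(w'')| \leq 4k|w'-w''|$. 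The triangle inequality $|f(w')-f(w'')| \leq |u(w')-u(w'')| + |v(w')-v(w'')|$ then yields the claimed bound $8k|w'-w''|$; this matches the convention used in Lemma \ref{lemma_C2_inequality}, where complex moduli are controlled by summing real and imaginary parts rather than by the Euclidean norm.

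There is no real obstacle here; the lemma is a routine bookkeeping exercise. The one point to watch is keeping the constants consistent with the paper's style of bounding $|F|$ by $|\Re F| + |\Im F|$, which is precisely what produces the factor $8$ (two applications of $|a|+|b|$: once to pass from $|f_w|,|f_{\bar w}|$ to $|f_x|,|f_y|$, and again to pass from $|u|,|v|$ to $|f|$).
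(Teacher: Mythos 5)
Your proposal is correct and follows essentially the same route as the paper: convert the Wirtinger bounds into $|f_x|,|f_y|\leq 2k$, apply the one-variable mean value theorem to the real and imaginary parts of $f$ along the segment joining $w'$ and $w''$, and sum the two resulting $4k$ bounds. The only cosmetic difference is that you name the parts $u$ and $v$ where the paper writes $\Re f$ and $\Im f$.
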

\begin{proof}
	We have
  \[
    |f_x(w_o)| = |f_w(w_o) + f_{\bar w}(w_o)| 
    \leq |f_w(w_o)| + |f_{\bar w}(w_o)| \leq 2k.
  \]
  Similarly, $|f_y(w_o)| \leq 2k$.
  By the Mean Value Theorem for real valued function, we have a point $w_o$ on the line segment
  joining $w'$ and $w''$ such that
  \[
    \Re f(w') - \Re f(w'') = \Re f_x(w_o) \Re (w' - w'') + \Re f_y(w_o) \Im (w' - w'').
  \]
  Taking the absolute value gives
  \[
    |\Re f(w') - \Re f(w'')| \leq |\Re f_x(w_o) \Re (w' - w'')| + |\Re f_y(w_o) \Im (w' - w'')|
  \]
  \[
    \leq |f_x(w_o)| |(w' - w'')| + |f_y(w_o)| |(w' - w'')| \leq 4 k |w' - w''|.
  \]
  Similarly,
  \[
    |\Im f(w') - \Im f(w'')| \leq 4k |w' - w''|.
  \]
  Now we split the following into real and imaginary parts
  \[
  |f(w') - f(w'')| \leq |\Re f(w') - \Re f(w'')| + |\Im f(w') - \Im f(w'')| \leq 8k |w'-w''|.
  \]

\end{proof}

\section{Local polynomially convexity of $M^k \subset \C^n$ for $k<n$}
\label{section_klessn}

\label{section_local_polyconvex_k_less_n}
When $\lfloor \frac{3k}{2} \rfloor  \leq n$, $M^k$ admits a polynomially convex embedding,
and the set of polynomially convex embeddings are dense in the set of all smooth embeddings 
(see \textcite{low-wold2009}, \textcite{forstneric-rosay1993}).

Let $\frac{2}{3}(n+1) \leq k < n$. In \cite{coffman2006}, Coffman introduced a nondegeneracy condition for order 1
CR-singular points of a generically embedded smooth manifold $M^k \subset \C^n$.
This nondegeneracy holds on a dense open subset of CR-singular point according to \textcite{gupta-shafikov2025}.
At such a point, Coffman constructed a normal form (\cite[Proposition 3.3]{coffman2006}):
when $M$ is real analytic,
there is a local biholomorphic change of coordinates centered at a nondegenrate CR-singular point $p \in M$
where $M$ is given by
	\begin{align}
		\label{eqn_normalform_coffman}
		y_s &= 0 & \text{for } s= 2, ..., k-1,
		\\
		z_t &= \bar z_1 ( z_{2(t-k+2)+1} + i z_{2(t-k+2)+1}) & \text{for } t=k, ..., n-2,
		\nonumber
		\\
		z_{n-1} &= \bar z_1^2, &
		\nonumber
		\\
		z_n &= \bar z_1 (z_1 + x_2 + i x_3).&
		\nonumber
	\end{align}

The following proof is inspired by \textcite[Lemma 3.1]{gupta-shafikov2020}

\begin{theorem}
	Let $M^k \subset \C^n$ be a real analytic manifold and $p\in M$ be a CR-singularity of order 1 that satisfies the
	nondegeneracy condition \cite{coffman2006}. Then $M$ is locally polynomially convex at $p$.
	Further, there exists a compact neighbourhood $K$ of $p$ such that $\mathscr{P}(K) = \mathscr{C}(K)$.
\end{theorem}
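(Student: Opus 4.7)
The plan is to apply the fibration theorem (Theorem \ref{thm_fibre_theorem}) with respect to the real-valued function $\rho(z) := |z_1|^2$ on $M$, reducing the question to polynomial convexity together with $\mathscr{P} = \mathscr{C}$ on each level set of $\rho$. To invoke the fibration theorem one first needs $\rho \in \mathscr{P}(K)$ for a small compact neighbourhood $K$ of $p$; Coffman's identity $z_{n-1}|_M = \bar z_1^2$ makes the \emph{holomorphic} polynomial $z_1^2 z_{n-1}$ restrict to the non-negative real function $|z_1|^4$ on $M$, so $|z_1|^4 \in \mathscr{P}(K)$, and uniformly approximating $\sqrt{\cdot}$ by polynomials on a compact real interval promotes this to $\rho \in \mathscr{P}(K)$.

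The singular fiber $K_0 := K \cap \{z_1 = 0\}$ is handled immediately: substituting $z_1 = 0$ into Coffman's equations annihilates every $\bar z_1$-factor, leaving $K_0$ inside the totally real affine subspace coordinatized by $x_2, \ldots, x_{k-1}$. Compact subsets of a totally real subspace satisfy $\mathscr{P} = \mathscr{C}$ as an immediate consequence of Theorem \ref{thm_lipschitz_graph_general_polynomial_convexity} with $R \equiv 0$.

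The heart of the argument is the analysis of the regular fibers $K_c$ for $c > 0$. On such a fiber, the identity $z_1 \bar z_1 = c$ combined with $z_{n-1}|_M = \bar z_1^2$ yields $z_1 z_{n-1} = c\,\bar z_1$, so
\[
  \bar z_1 \;=\; \frac{z_1 z_{n-1}}{c} \;\in\; \mathscr{P}(K_c).
\]
Coffman's form then lets every remaining conjugate $\bar z_j$ be rewritten as a polynomial in the $z$-coordinates on $K_c$: for $2 \leq j \leq k-1$ the coordinate $z_j = x_j$ is already real; $\bar z_{n-1} = z_1^2$; the relation $z_n|_{K_c} = c + \bar z_1(x_2 + ix_3)$ gives $\bar z_n = c + z_1(x_2 - ix_3)$; and for $k \leq t \leq n-2$, since $z_t$ is $\bar z_1$ times a $\C$-linear combination of the real $x_\ell$'s, $\bar z_t$ is $z_1$ times another such combination, again in $\mathscr{P}(K_c)$. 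Hence $\mathscr{P}(K_c)$ is a conjugation-closed unital subalgebra of $\mathscr{C}(K_c)$ which separates points (the ambient coordinate polynomials already embed $K_c$ into $\C^n$), and the Stone--Weierstrass theorem forces $\mathscr{P}(K_c) = \mathscr{C}(K_c)$; in particular $K_c$ is polynomially convex.

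The main technical obstacle of this approach is the conjugation-closure verification on $K_c$, all of which ultimately reduces to the single identity $\bar z_1 = z_1 z_{n-1}/c$. That identity is available only for $c > 0$, which is precisely why the fiber $c = 0$ needs a separate (if trivial) treatment. Once the fiberwise conclusions are in hand, Theorem \ref{thm_fibre_theorem} assembles them into $\mathscr{P}(K) = \mathscr{C}(K)$ on a compact neighbourhood $K$ of $p$, and in particular $M$ is locally polynomially convex at $p$.
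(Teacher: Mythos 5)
Your proposal is correct, but it takes a genuinely different route from the paper. The paper also reduces to the fiber theorem (Theorem \ref{thm_fibre_theorem}), but it fibers over the $\R^{k-2}$-valued projection $(z_1,\dots,z_n)\mapsto(z_2,\dots,z_{k-1})$ (real-valued on $M$ because $y_s=0$ in Coffman's form); each fiber is then a two-dimensional set parametrized by $\zeta=z_1$ on which the algebra contains $\zeta$ and $\bar\zeta^2$, and the paper invokes Minsker's theorem that the closed algebra generated by $z$ and $\bar z^2$ on a sufficiently small disc is all of $\mathscr{C}$. You instead fiber over the single real function $|z_1|^2$ (correctly exhibited in $\mathscr{P}(K)$ via $z_1^2 z_{n-1}=|z_1|^4$ and polynomial approximation of the square root --- though you could even skip that step and fiber over $z_1^2z_{n-1}$ itself), and on each fiber $|z_1|^2=c>0$ you recover $\bar z_1=z_1z_{n-1}/c$ and hence all conjugates $\bar z_j$, closing with Stone--Weierstrass; the degenerate fiber $c=0$ collapses into a real subspace. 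Your conjugation-closure bookkeeping checks out against the normal form ($\bar z_{n-1}=z_1^2$, $\bar z_n = c + z_1(z_2-iz_3)$ using $x_j=z_j$ for $2\le j\le k-1$, and similarly for $z_k,\dots,z_{n-2}$), and the $c=0$ fiber is better justified by the standard fact that compact subsets of $\R^m\subset\C^m$ satisfy $\mathscr{P}=\mathscr{C}$ (stated in Section \ref{section_background}) than by Theorem \ref{thm_lipschitz_graph_general_polynomial_convexity} with $R\equiv 0$, which describes a graph in $\C^{2m}$ rather than the subspace itself. What each approach buys: yours is more elementary and self-contained, replacing the appeal to Minsker's nontrivial result about the algebra $[z,\bar z^2]$ by Stone--Weierstrass on the (now $(k-1)$-dimensional, non--totally-real) level sets of $|z_1|^2$, and it needs smallness of the neighbourhood only for validity of the normal form; the paper's version keeps all fibers two-dimensional and uniform in the parameter, at the cost of citing Minsker and requiring $\epsilon$ small for that citation to apply.
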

\begin{proof}
	We will consider the Coffman normal form (\ref{eqn_normalform_coffman}) for $M$ at centered at $p$. 

	Let $K_\epsilon = M \cap (\overline{B^1_\epsilon(0)} \times \C^{n-1})$ where $\epsilon >0$ is small.
	Consider the map $\tilde G: \C^n \to \C^{k-2}$ as the projection on the coordinates
	$(z_1,..., z_n) \mapsto (z_2, ..., z_{k-1})$. The restriction of $G:=G|_{K_\epsilon}$  takes values in $\R^{k-2}$
	as $y_s = 0$ for $s = 2,..., k-1$.
	By the iterated version of Theorem \ref{thm_fibre_theorem}, we have
	$K_\epsilon$ is polynomially convex if and only if for each 
	$t=(t_2,...,t_{k-1})\in \R^{k-2}$, the fiber $M^2_t := G^{-1}(t_2,...,t_{k-1})$ is polynomially convex.

	We can parametrize $M^2_t$ with one complex variable $\zeta \in \overline{B_\epsilon^1(0)}$.
	Set $z_1 = \zeta$ and fix $z_j = t_j$ for $j=2,...,k-1$ in the normal form (\ref{eqn_normalform_coffman}).
	Let this parametrization be $\phi: \overline{B_\epsilon^1(0)} \to K_\epsilon$.
	Note that $\pi_1 \circ \phi(\zeta) = \zeta$ and
	$\pi_{n-1} \circ \phi(\zeta) = \bar \zeta^2$. Hence, the algebra \[
		\mathcal {A} := \overline { \{ P \circ \phi | P:\C^n \to \C \text{ polynomial }\} }
	\] contains the algebra generated by $\zeta, \bar \zeta^2$ on $\overline{B_\epsilon^1(0)}$.
	For $\epsilon$ small, this algebra equals $\mathscr{C}(\overline{B_\epsilon^1(0)})$  \cite{minsker1976}.
	By pulling back continuous function, we get $\mathscr{C}(M^2_t) = \mathscr{P}(M^2_t)$:
	if $f\in \mathscr{C}(M_t^2)$, then $f \circ \phi \in \mathscr{C}(\overline{B_\epsilon^1(0)})$. So $f \circ \phi \in \mathcal{A}$
	and there is a sequence of polynomials $\{P_n\}$ such that $P_n \circ \phi$ limits to $f\circ \phi$ uniformly on $\overline{B_\epsilon^1(0)}$.
	Hence, $\{P_n\}$ limits to $f$ uniformly on $M^2_t$.
	Which implies $M^2_t$ is polynomially convex for each $t$. Hence, $M$ is polynomially convex.
	Now $\mathscr{P}(K_\epsilon) = \mathscr{C}(K_\epsilon)$ follows from the second part of Theorem \ref{thm_fibre_theorem}.
\end{proof}

\section{Proof of Theorem~\ref{thm_hyperbolic_higher_dimension_vanish_order_two}}
\label{section_proof_of_thn_vanish_order_two}

We will prove Theorem~\ref{thm_hyperbolic_higher_dimension_vanish_order_two} in this section.

\begin{definition}
	\label{defn:vanishes_order_two_in_w}
We say a smooth function $F: [-T,T]^{n-2} \times \overline{B_r(0)}$ vanishes to order two in $w$ if
\[
	\lim_{\substack{(t,w)\to 0 \\ w \neq 0 }} \frac{|F(t,w,\bar w)|}{|w|^2}  = 0,
\]
where $t=(t_1,\hdots,t_{n-2})$.
\end{definition}

First, we look at $M^3 \subset \C^3$. Let $p\in M$ be a hyperbolic point.
We have holomorphic coordinates $z_1,z_2,z_3$ of $\C^3$
and smooth local parametrization $(t, u, v)$ of $M$ such that
$M$ is represented centered at $p$ in the normal form (\ref{eqn_bishop_normal_form}):
\begin{equation}
  \label{eqn:hyper_normal}
  \begin{split}
    z_1 &= t + ih(t, u, v), \\   
    z_2 &= u + iv, \\
    z_3 &= w \bar w +  \gamma ( w^2 + \bar w^2 ) + F(t, w, \bar w), 
  \end{split}
\end{equation}
where $w=u+iv$, $\gamma > \frac{1}{2}$, $(t,u,v)$ are real parameters,
$h: \R^3 \to \R$ is a real-valued smooth function that vanishes to order two at zero
and $F: \R^3 \to \C$ is a complex-valued smooth function that vanishes to order three at zero.


\begin{remark}
In the normal form, $F$ vanishes to order three at zero.
Therefore, its Taylor expansion is of the form
$$
  F(t,w, \bar w) =\sum_{j=3}^{k-1} \sum_{a+b+c=j} C_{a,b,c} t^a w^b \bar w^c + F_k(t,w, \bar w),
$$
where $F_k$ vanishes to order $k$ at zero and $C_{a,b,c}$ are constants, i.e.,
$$
\lim_{(t,w)\to 0} \frac{|F_k(t,w, \bar w)|}{\norm{(t,w, \bar w)}^k} = 0.
$$
\end{remark}

\begin{remark}
If $F$ vanishes to order two in $w$ at zero, then its
Taylor expansion near zero has the form
$$
  F(t,w, \bar w) =\sum_{j=3}^{k-1} \sum_{\substack{{a+b+c=j} \\ b+c\geq 2}} C_{a,b,c} t^a w^b \bar w^c + F_k(t,w, \bar w),
$$
where $F_k$ vanishes to order $k$ near zero and vanishes to order two in $w$.
Therefore, $F$ omits terms of the form  $t^j w, t^j \bar w$ and $t^j$.
\end{remark}

\begin{proof}[Proof of Theorem \ref{thm_hyperbolic_higher_dimension_vanish_order_two}]


We will consider $n=3$ for simplicity. The same proof can be generalized to any $n\geq 3$. For this consider $t = (t_1, ..., t_n) \in [-T,T]^{n-2}$.

Consider the proper holomorphic function $P:\C^3 \to \C^3$ given by
\[
  P(\zeta_0, \zeta_1, \zeta_2) = \left(\zeta_0, \zeta_1 + i \zeta_2, \zeta_1^2 + \zeta_2^2 + 2\gamma (\zeta_1^2 - \zeta_2^2) \right).
\]
We can solve for $P^{-1}(M)$ in terms of the parameters $t, w = u + iv$ by setting 
\[
  P(\zeta_0, \zeta_1, \zeta_2) = \left( t + ih(t, u, v), u + iv, u^2 + v^2 + 2 \gamma (u^2 - v^2) + F(t, u, v) \right)
\]
for $t,u,v$ real. The first coordinate gives us $\zeta_0 = t + ih(t, u, v)$. The second coordinate gives the relation
$\zeta_1 + i\zeta_2 = u+iv $. The last coordinate gives $\zeta_1^2 + \zeta_2^2 + 2\gamma (\zeta_1^2 - \zeta_2^2) = u^2 + v^2 + 2 \gamma (u^2 - v^2) + F(t, u, v)$.
The last two equations give a quadratic equation in $\zeta_1$.
Solving the quadratic equation yields:
\begin{equation}
  \begin{split}
    \zeta_1 &= \frac{(2\gamma - 1)(u+iv)}{4\gamma} \pm \frac{\sqrt{((2\gamma+1)u - (2\gamma-1)iv)^2 + 4\gamma F(t,u,v) }}{4\gamma}, \\
    \zeta_1 &= \frac{(2\gamma - 1)w \pm \sqrt{( 2\gamma \bar w + w )^2 + 4\gamma F(t,w, \bar w) }}{4\gamma}.
  \end{split}
\end{equation}
When $w\neq 0$, we can simplify the discriminant as follows:
\begin{equation}
  \begin{split}
    \sqrt{( 2\gamma \bar w + w )^2 + 4\gamma F(t,w, \bar w) } 
    = \sqrt{( 2\gamma \bar w + w )^2 \left( 1+ \frac{4\gamma F(t,w, \bar w)}{( 2\gamma \bar w + w )^2 } \right)}.
  \end{split}
\end{equation}
We have two solutions corresponding to each branch as
\begin{equation}
  \begin{split}
    +\sqrt{D} = + ( 2\gamma \bar w + w ) \sqrt{\left( 1+ \frac{4\gamma F(t,w, \bar w)}{( 2\gamma \bar w + w )^2 } \right)}, \\
    -\sqrt{D} = - ( 2\gamma \bar w + w ) \sqrt{\left( 1+ \frac{4\gamma F(t,w, \bar w)}{( 2\gamma \bar w + w )^2 } \right)}. \\
  \end{split}
\end{equation}

As $F$ vanishes to order two in $w$ at zero and 
  $\gamma > \frac{1}{2}$, we have 
$$
  \lim_{\substack{(t,w)\to 0 \\ w\neq 0 }} \frac{|4\gamma F(t,w, \bar w)|}{|( 2\gamma \bar w + w )|^2 }
  \leq
  \lim_{(t,w)\to 0} \frac{|4\gamma F(t,w, \bar w)|}{|w|^2 (2\gamma - 1)^2 } = 0.
$$

  So $\frac{4\gamma F(t,w, \bar w)}{( 2\gamma \bar w + w )^2 }$ is small
  in a neighbourhood of zero. 
  Hence, $\sqrt{D}$ is well-defined as it is a compostion of $\sqrt{1+z}$
  which is well-defined near $0\in \C$
  with $\frac{4\gamma F(t,w, \bar w)}{( 2\gamma \bar w + w )^2 }$ which is small
  near $(t,w)=0$.

%
%
%

Define $f(t,w, \bar w)$ as
$$
f(t,w, \bar w) =     \sqrt{( 2\gamma \bar w + w )^2 + 4\gamma F(t,w, \bar w) }  - (2\gamma \bar w + w).
$$
\newcommand{\tempA}{( 2\gamma \bar w + w )}
It follows that $f$ is well-defined, and
we can show that
$$
\lim_{\substack{(t,w)\to 0 \\ |w| \neq 0}} \frac{f(t,w, \bar w)}{w} = 0,
$$
and show that $f$ is $\mathscr{C}^1$-smooth function with its first partial derivative vanishing at zero.


Now write $\zeta_1$ in terms of $f$:
\[
  \zeta_1 =
 \frac{(2\gamma - 1)(u+iv) \pm \left((2\gamma+1)u - (2\gamma-1)iv + f(t,u,v)\right) }{4\gamma}.
\]

We can rewrite the solutions in terms of $f$ as
\begin{equation*}
  \begin{split}
		& P^{-1}(M) = \left\{\left(
    t+ih(t,u,v), 
    u + \frac{f(t,u,v)}{4\gamma},
    v + \frac{i f(t,u,v)}{4\gamma}
    \right): t,u,v \text{ real} \right\} \\
    & \bigcup 
    \Bigl\{\Bigl(
    t+ih(t,u,v), 
    \frac{-u + (2\gamma - 1)iv}{2\gamma}  -  \frac{f(t,u,v)}{4\gamma}, 
    \frac{-(2\gamma -1) iu + v}{2\gamma}  -  \frac{if(t,u,v)}{4\gamma}
    \Bigr) : t,u,v \text{ real} \Bigr\}.
  \end{split}
\end{equation*}
Denote the first set of solutions as $V_1$ and the second set of solutions as $V_2$.
Then 
\begin{equation*}
  \begin{split}
    V1 &=
      \left\{
      \left( t, u, v \right)
      + \left( ih(t,u,v), \frac{f(t,u,v)}{4\gamma}, \frac{i f(t,u,v)}{4\gamma} \right)
    : t,u,v \text{ real} \right\},
    \\
    V2 &=  \Bigl\{ \Bigl(
          t,
          \frac{-u + (2\gamma - 1)iv}{2\gamma},
          \frac{-(2\gamma -1) iu + v}{2\gamma}
        \Bigr)  \\
        &+ \Bigl(
          ih(t,u,v), 
          - \frac{f(t,u,v)}{4\gamma},
          -  \frac{if(t,u,v)}{4\gamma}
        \Bigr)
        : t,u,v \text{ real} \Bigr\}.
  \end{split}
\end{equation*}
The solutions are $\mathscr{C}^1$ perturbations of planes. 
We can check that the two planes are totally real
by checking that any
basis for each plane is $\C$-linearly independent.
This gives us local polynomial convexity of $V_1$ and $V_2$ at zero as they
are $\mathscr{C}^1$-small perturbation of totally real planes (Theorem \ref{thm_lipschitz_graph_general_polynomial_convexity}).
We have $r_1>0$ and $r_2>0$,
such that $K_1 = V_1 \cap \overline B_{r_1}(0)$ 
and $K_2 = V_2 \cap \overline B_{r_2}(0)$ are polynomially convex.
We will show local polynomial convexity of $K_1 \cup K_2$.
For this, we use Kallin's lemma (Theorem \ref{thm_kallin_lemma}) with the following polynomial:
\[
  Q(\zeta_0, \zeta_1, \zeta_2) = \epsilon (\zeta_1^2 + \zeta_2^2) + i \zeta_1 \zeta_2.
\]
Choose $\epsilon > 0$ small enough so that
$ \epsilon \left(-1 + \frac{1}{\gamma}\right) - \frac{1}{2 \gamma} + \frac{1}{4 \gamma^{2}} < 0 $.
We can ensure this as $\gamma > 1/2$.

We can compute $Q(K_1)$ and represent it as
\begin{equation}
  \label{eqn:hyper_Q_k_1}
  \begin{split}
    Q(K_1) &= \{
      \epsilon u^2 + \epsilon v^2 + \frac{2\epsilon-1}{4\gamma} uf(t,u,v) - \frac{f^2(t,u,v)}{16\gamma^2}  \\
    &+ i\frac{2\epsilon+1}{4\gamma} vf(t,u,v) + i uv
    :  (t,u,v) \in U_1
  \},
  \end{split}
\end{equation}
where $U_1 \subset \R^3$ is a compact neighbourhood of zero.

As $f/w$ vanishes at zero, we have $vf/(u^2+v^2)$, $uf/(u^2+v^2)$ and $f^2/(u^2+v^2)$ converge to zero as $(t,u,v) \to 0$.
Hence, $\Re Q(K_1)$ is non negative, choosing $r_1>0$ smaller if necessary.
For
$$
 s = \left( t+ih(t,u,v), 
  u + \frac{f(t,u,v)}{4\gamma},
  v + \frac{i f(t,u,v)}{4\gamma} \right)
  \in K_1,
$$
if $\Re Q(s) = 0$, then $u=0$ and $v=0$,
and hence, $Q(s) = 0$. Analogously, for $Q(K_2)$:
\begin{equation}
  \begin{split}
    Q(K_2) &= \{ \\
    &+ u^{2} \left(\epsilon \left(-1 - \frac{1}{\gamma}\right) - \frac{1}{2 \gamma} - \frac{1}{4 \gamma^{2}}\right) + v^{2} \left(\epsilon \left(-1 + \frac{1}{\gamma}\right) - \frac{1}{2 \gamma} + \frac{1}{4 \gamma^{2}}\right)  \\
    &- \frac{f^{2}}{16 \gamma^{2}} + f \left(- \frac{\epsilon u}{2 \gamma} - \frac{u}{4 \gamma} - \frac{u}{4 \gamma^{2}}\right) \\
    &+ i \left(\epsilon \left(- \frac{f v}{2 \gamma} - \frac{2 u v}{\gamma}\right) + u v + \frac{f v}{4 \gamma} - \frac{f v}{4 \gamma^{2}} - \frac{u v}{2 \gamma^{2}}\right) \\
    &: (t,u,v) \in U_2
    \},
  \end{split}
\end{equation}
where $U_2 \subset \R^3$ is a compact neighbourhood of zero.
The coefficients of $u^2$ and $v^2$ are negative. Take $r_2>0$ 
smaller if necessary, to ensure $\Re Q(K_2)$ is non positive.
For $$s =
\left(
    t+ih(t,u,v), 
    \frac{-u + (2\gamma - 1)iv}{2\gamma}  -  \frac{f(t,u,v)}{4\gamma},
    \frac{-(2\gamma -1) iu + v}{2\gamma}  -  \frac{if(t,u,v)}{4\gamma}
\right) \in K_1,
$$
if $\Re Q(s) = 0$,
then $u=0$ and $v=0$, and hence, $Q(s) = 0$.

The image $Q(K_1)$ lies in the right half plane except at $0$,
and $Q(K_2)$ lies in the left half plane except at $0$.
We can conclude that $\widehat{Q(K_1)} \cap \widehat{Q(K_2)} = \{0\}$.

Finally, we wish to check the polynomial convexity of $Q^{-1}(0) \cap (V_1 \cup V_2)$.
\[
  Q^{-1}(0) \cap K_1 = \{ s \in K_1: Q(s) = 0 \}.
\] 
With reference to equation (\ref{eqn:hyper_Q_k_1}), this happens only at $s \in K_1$
where $u=0$ and $v=0$. That is,
\[
  Q^{-1}(0) \cap K_1  \subset
\{ ( t+ih(t,0,0), 0, 0) :  (t,0,0) \in U_1  \}.
\]
Since $f$ vanishes in $w$, we have $f(t,0,0) = 0$.
Similarly,
\[
  Q^{-1}(0) \cap K_2 \subset \{ ( t+ih(t,0,0), 0, 0) :  (t,0,0) \in U_2 \}.
\]
We have $Q^{-1}(0) \cap (K_1 \cup K_2) = \{ ( t+ih(t,0,0), 0, 0) : (t,0,0) \in U_1\cup U_2 \}$
which is a $\mathscr{C}^1$-small perturbation of a totally real set.
Hence, $Q^{-1}(0) \cap (K_1 \cup K_2)$ is polynomially convex by
choosing $r_1>0$ and $r_2>0$ smaller if necessary to ensure this.

By Kallin's lemma (Theorem \ref{thm_kallin_lemma}),
we conclude that $K_1 \cup K_2$ is polynomially convex.
Further, as $\mathscr{P}(K_j) = \mathscr{C}(K_j)$ for $j=1,2$,
we have $\mathscr{P}(K_1 \cup K_2) = \mathscr{C}(K_1 \cup K_2)$.
So, every compact subset of $K_1 \cup K_2$ is polynomially convex.

Now we can assess the local polynomial convexity of $M$ at $p$. 
We see that $P^{-1}(M \cap \overline B(0,\epsilon)) \subset (K_1 \cup K_2)$
for some small $\epsilon > 0$.
This is true as we can show that $P$ defines
a homeomorphism for $V_j$ and $M$.

So $P^{-1}(M \cap \bar B(0,\epsilon))$ is polynomially convex and satsifies
$$
\mathscr{P}(P^{-1}(M \cap \bar B(0,\epsilon)))
=
\mathscr{C}(P^{-1}(M \cap \bar B(0,\epsilon))).
$$
Hence, by Theorem \ref{thm_proper_holo_mapping},
we can conclude that $M \cap \bar B(0,\epsilon)$
is polynomially convex and satisfies
$$
\mathscr{P}(M \cap \bar B(0,\epsilon))
=
\mathscr{C}(M \cap \bar B(0,\epsilon)).
$$
In particular, $M$ is locally polynomially convex at $p$.

\end{proof}

\begin{remark}
The condition that $F$ vanishes to order two in $w$ is not necessary.
  The following manifold $M^3 \subset \C^n$:
  \begin{equation}
    \begin{split}
      z_1 &= t, \\
      z_2 &= u+iv = w,\\
      z_3 &= w \bar w + \gamma (w^2 + \bar w^2) + t^2 \bar w,
    \end{split}
  \end{equation}
  can be shown to be locally polynomially convex at $0$ using another method:
  consider the polynomial $f:M\to \R$ as $f(z_1,z_2,z_3) = z_1$.
  The continuous function $f$ is real-valued on $M$ and is a polynomial.
  We can show that $f^{-1}(t)$ is polynomially convex as it
  can be identified with $M^2 \subset \C^2$ hyperbolic point for $t\in (-\epsilon, \epsilon)$  for $\epsilon>0$ small.
	By the Theorem \ref{thm_fibre_theorem}, $M$ is locally polynomially convex at $0$.
\end{remark}
We will generalize the above in Section \ref{section:flat-hyperbolic-point}.

\section{Hyperbolic points in $M^2 \subset \C^2$}
\label{section_hyperbolic_m2_c2}

We can represent the manifold $M \subset \C^2$ in Theorem (\ref{thm_polynomial_convexity_two_dim_with_C2_condition}) by the following
\begin{equation}
	\label{eqn_defining_equation_of_M_2}
	\begin{split}
	z_1 &= w \\
	z_2 &= w \bar w + \gamma (w^2 + \bar w^2) + F(w, \bar w),
	\end{split}
\end{equation}
where $\gamma > \frac{1}{2}$,
$w \in \overline{D_R(0)} \subset \C$,
and a smooth function $F: \overline{D_R(0)} \to \C$ 
that vanishes to order three at $0\in M$.

The following proof is inspired by the work of \textcite{forstnerivc1991new}.

\begin{proof}[Proof of Theorem \ref{thm_polynomial_convexity_two_dim_with_C2_condition}]
	Consider $P:\C^2 \to \C^2$ a proper holomorphic map given by:
	$$
	P(z_1,z_2) = \left(z_1, z_1 z_2 + \gamma(z_1^2 + z_2^2) \right)
	$$

  As shown in \cite[VI. Theorem]{forstnerivc1991new}, the pre-image $P^{-1}(M)$ can be seen as the union 
	$ S_1 \cup S_2 $ where
	\begin{equation*}
		\begin{split}
			S_1 &= \{ (\zeta, \bar \zeta + f(\zeta, \bar \zeta)): \zeta \in \overline{B_r(0)} \} \label{eqn:S_1-and-S_2}, \\
			S_2 &= \left\{ \left(\zeta, -\frac{1}{\gamma}\zeta - \bar \zeta + g(\zeta, \bar \zeta) \right): \zeta \in \overline{B_r(0)} \right\}, 
		\end{split}
  \end{equation*}
	and $f: \overline{B_r(0)} \to \C$,
	$g: \overline{B_r(0)} \to \C$ are $\mathscr{C}^1$-smooth functions
	with $df(0) = dg(0) = 0$.

  \subsection{Polynomial convexity of $S_1$ and $S_2$} 
  We will show $f$ and $g$ are Lipschitz to apply Theorem \ref{thm_lipschitz_graph_general_polynomial_convexity}.

	As $P(S_1) \subset M$,
	$ P(\zeta, \bar \zeta + f(\zeta, \bar \zeta)) \in M $
	for $\zeta \in \overline{B_r(0)}$.
	So $P(\zeta, \bar \zeta + f(\zeta, \bar \zeta))$ must satisfy the
	defining equation (\ref{eqn_defining_equation_of_M_2}) of $M$.
	Solving this yields:
	\begin{gather*}
		\zeta\bar\zeta + \gamma(\zeta^2 + \bar\zeta^2) + F(\zeta, \bar \zeta)
		= \zeta( \bar \zeta + f(\zeta, \bar \zeta)) + \gamma (\zeta^2 + (\bar \zeta + f(\zeta, \bar \zeta))^2)
	\end{gather*}
	which is a quadratic equation in $f(\zeta, \bar \zeta)$:
	\begin{equation}
		\label{eqn_quadratic_for_f}
	0 = \gamma f(\zeta, \bar \zeta)^2 +	(\zeta + 2 \gamma \bar\zeta ) f(\zeta, \bar \zeta)
	- F(\zeta, \bar \zeta).
	\end{equation}

	Solving for $f(\zeta, \bar \zeta)$ yields:
	$$
	f(\zeta, \bar \zeta) = - \frac{1}{2\gamma}\left( (\zeta + 2 \gamma \bar\zeta) - \sqrt{(\zeta+2\gamma\bar\zeta)^2 + 4\gamma F(\zeta, \bar \zeta)} \right).
	$$
  Here we choose the root that ensures $df(0)=0$.

	To make sense of the square root, consider the following inequality:
	\begin{align*}
    \left| \frac{4\gamma F(\zeta, \bar \zeta)}{(\zeta+2\gamma\bar\zeta)^2}\right| 
		& \leq 4\gamma  \left| \frac{F(\zeta, \bar \zeta)}{(\zeta+2\gamma\bar\zeta)^2}\right|
    \leq \frac{4\gamma}{(2\gamma - 1)^2} \left| \frac{F(\zeta, \bar \zeta)}{\zeta^2}\right|
		\leq \frac{4\gamma}{(2\gamma - 1)^2}  8 \norm{F}_{\mathscr{C}^2} \\
		&\leq \frac{4\gamma}{(2\gamma - 1)^2}  8 \left(\frac{(2\gamma-1)^3}{2^{14} \gamma^3}\right)
    \leq \left(\frac{(2\gamma-1)}{2^{9} \gamma^2}\right)
    < \frac{1}{4}.
	\end{align*}
  Here, we have used Lemma \ref{lemma_C2_inequality}, equation (\ref{eqn_theorem-C2-bound-main}),
	and the assumption $\gamma > \frac{1}{2}$.

  This function is well defined as we take
	the square root over the principal branch in the inequality above.
  \begin{align}
	f(\zeta, \bar \zeta) &= - \frac{(\zeta + 2 \gamma \bar\zeta)}{2\gamma}\left( 1 - \sqrt{1 + \frac{4\gamma F(\zeta, \bar \zeta)}{(\zeta+2\gamma\bar\zeta)^2}} \right)
  \nonumber
  \\
	\left|\frac{f(\zeta, \bar \zeta)}{\zeta + 2\gamma\bar\zeta}\right| &= 
	\frac{1}{2\gamma}\left| 1 - \sqrt{1 + \frac{4\gamma F(\zeta, \bar \zeta)}{(\zeta+2\gamma\bar\zeta)^2}} \right|
  \label{eqn_equation_f_by_gamma_zeta}
  \end{align}

  By Lemma \ref{lemma_sqrt_inequality},
	$$
	\left|\frac{f(\zeta, \bar \zeta)}{\zeta + 2\gamma\bar\zeta}\right| \leq
	\frac{1}{2\gamma} \frac{10}{11} \left| \frac{4\gamma F(\zeta, \bar \zeta)}{(\zeta+2\gamma\bar\zeta)^2}\right|
  \leq 
  \frac{2}{(2\gamma - 1)^2}  4 \norm{F}_{\mathscr{C}^2} .
	$$
  In particular, using equation (\ref{eqn_theorem-C2-bound-main}):
	$$
  \left|\frac{f(\zeta, \bar \zeta)}{\zeta + 2\gamma\bar\zeta}\right| \leq \frac{1}{8\gamma},
	$$
	and
  \[
    \left|\frac{F_{z}(\zeta, \bar \zeta)}{\zeta + 2\gamma\bar\zeta}\right| \leq
    \frac{1}{(2\gamma-1)} \left|\frac{F_{z}(\zeta, \bar \zeta)}{\zeta}\right|
    \leq
    \frac{1}{(2\gamma-1)} 4 \norm{F}_{\mathscr{C}^2}.
  \]

	Applying $\partial/\partial z$ to equation (\ref{eqn_quadratic_for_f}) and solving for $f_z$ gives
	$$
	f_{z}(\zeta, \bar\zeta) = \frac{F_z(\zeta, \bar \zeta) - f(\zeta, \bar\zeta)}{2 \gamma f(\zeta, \bar\zeta) + (\zeta + 2\gamma \bar\zeta)}
  =
	\frac{\frac{F_z(\zeta, \bar \zeta)}{\zeta+2\gamma\bar\zeta} - \frac{f(\zeta, \bar\zeta)}{\zeta+2\gamma\bar\zeta}}
	{2 \gamma \frac{f(\zeta, \bar\zeta)}{\zeta+2\gamma\bar\zeta} + 1}.
	$$
	$$
	|f_{z}(\zeta, \bar\zeta)| \leq
	\frac{ \left|  \frac{F_z(\zeta, \bar \zeta)}{\zeta+2\gamma\bar\zeta} \right| + \left| \frac{f(\zeta, \bar\zeta)}{\zeta+2\gamma\bar\zeta}\right| }
	{1 - 2 \gamma \left|\frac{f(\zeta, \bar \zeta)}{\zeta+2\gamma\bar\zeta}\right| }.
	$$

	Using the above inequalities, we obtain:
	$$
	|f_{z}(\zeta, \bar\zeta)| \leq
	\frac{ 	\frac{4}{(2\gamma-1)} \norm{F}_{\mathscr{C}^2} + 	\frac{8}{(2\gamma -1)^2} \norm{F}_{\mathscr{C}^2} }
	{1 - 2 \gamma \frac{1}{8\gamma} }
	\leq \frac{16}{3}  \left( 	\frac{2\gamma  +1}{(2\gamma -1)^2} \right) \norm{F}_{\mathscr{C}^2} 
	$$

  \[
    \leq 	\frac{2^5 \gamma}{(2\gamma -1)^2}  \norm{F}_{\mathscr{C}^2} 
    \leq \frac{2^5 \gamma}{(2\gamma -1)^2}  \frac{(2\gamma-1)^3}{2^{14} \gamma^3}
    \leq  \frac{(2\gamma-1)}{2^{9}\gamma^2}.
  \]

	Similarly, solving for $f_{\bar z}$ gives
	\[
	  f_{\bar z}(\zeta, \bar \zeta) = \frac{F_{\bar z}(\zeta, \bar \zeta) - 2\gamma f(\zeta, \bar\zeta)}{2 \gamma f(\zeta, \bar\zeta) + (\zeta + 2\gamma \bar\zeta)}.
  \]
	We obtain
  \[
	|f_{\bar z}(\zeta, \bar \zeta)| \leq
	\frac{16}{3} \left( \frac{6\gamma - 1}{(2\gamma -1)^2} \right) \norm{F}_{\mathscr{C}^2} 
  \leq  	\frac{2^5 \gamma}{(2\gamma -1)^2} \norm{F}_{\mathscr{C}^2} 
    \leq \frac{2^5 \gamma}{(2\gamma -1)^2}  \frac{(2\gamma-1)^3}{2^{14} \gamma^3}
    \leq  \frac{(2\gamma-1)}{2^{9}\gamma^2}.
  \]

  \newcommand{\uk}{\left(\frac{2\gamma-1}{32\gamma^2}\right)}

  Hence, $|f_{z}(\zeta, \bar \zeta)| \leq \frac{1}{2^4} \uk$
  and $|f_{\bar z}(\zeta, \bar \zeta)| \leq \frac{1}{2^4} \uk$
  for $\zeta \in \overline{B_r(0)}$.

	As $P(S_2) \subset M$,
	$ P(\zeta, -\zeta/\gamma - \bar\zeta + g(\zeta, \bar\zeta)) \in M $
	for $\zeta \in \overline{B_r(0)}$.
	So $P(\zeta, -\zeta/\gamma - \bar\zeta + g(\zeta, \bar\zeta))$  must satisfy
	the defining equation (\ref{eqn_defining_equation_of_M_2}) of $M$:
	\begin{equation*}
			\zeta\bar\zeta + \gamma(\zeta^2 + \bar\zeta^2) + F(\zeta, \bar \zeta)= \zeta (-\zeta/\gamma - \bar\zeta + g(\zeta, \bar\zeta)) + \gamma(\zeta^2 + (-\zeta/\gamma - \bar\zeta + g(\zeta, \bar\zeta))^2),
	\end{equation*}
	which is a quadratic equation in $g(\zeta, \bar\zeta)$:
	\begin{equation}
		0 = \gamma g(\zeta, \bar\zeta)^2 - (\zeta + 2 \gamma\bar\zeta) g(\zeta, \bar\zeta) - F(\zeta, \bar \zeta).
	\end{equation}
	Doing a similar analysis as above:
  $|g_{z}(\zeta, \bar\zeta)| \leq \frac{1}{2^4} \uk$
	and
  $|g_{\bar z}(\zeta, \bar\zeta)| \leq \frac{1}{2^4} \uk$
  for $\zeta \in \overline{B_r(0)}$.

  By Lemma (\ref{lemma_complex_meanvalue_theorem_lipschitz}), 
	\begin{equation}
		\label{eqn_lipshitz_ineqn_for_f_and_g}
		\begin{split}
			|f(\zeta_1, \bar \zeta_1) - f(\zeta_2, \bar\zeta_2)| \leq \uk | \zeta_1 - \zeta_2|, \\
			|g(\zeta_1, \bar\zeta_1) - g(\zeta_2, \bar\zeta_2)| \leq \uk | \zeta_1 - \zeta_2|.
		\end{split}
	\end{equation}
  We can check that $\uk < \frac{1}{2}$ for $\gamma > \frac{1}{2}$.
  By Proposition \ref{thm_lipschitz_graph_general_polynomial_convexity}, $S_1$ and $S_2$ are polynomially convex:
  for $S_1$, this follows immediately from the proposition. For $S_2$, we must do an additional biholomorphic change of coordinates
  $z_2 \mapsto z_2 + \frac{1}{\gamma} z_1$ before applying the proposition.

  \subsection{Polynomial convexity of $S_1 \cup S_2$}

	We will use Kallin's lemma (Theorem \ref{thm_kallin_lemma})
  to obtain the polynomial convexity of the union.

  Consider the polynomial $\psi: \C^2 \to \C$ given by
  $$
  \psi(z_1,z_2) = \frac{1}{4} (z_1^2 - z_2^2) + \left(\frac{2\gamma-1}{16\gamma^2}\right) z_1 z_2.
  $$

  \newcommand{\uep}{\left(\frac{2\gamma-1}{16\gamma^2}\right)}

  We will show $\Re \psi(S_1)$ is to the right of the y-axis and only touches the y-axis at zero
  and $\Re \psi(S_2)$ is to the left of the y-axis and only touches the y-axis at zero.

	From the Lipschitz inequalities in (\ref{eqn_lipshitz_ineqn_for_f_and_g}), we have
    $|f(\zeta, \bar\zeta)| \leq \uk |\zeta|$ and 
    $|g(\zeta, \bar\zeta)| \leq \uk |\zeta|$.
  Denote $\uk$ by $\alpha$, then $\uep = 2\alpha$.
  \begin{itemize}
    \item Compute $\Re \psi(S_1)$:
      let $(\zeta, \bar\zeta + f(\zeta, \bar\zeta)) \in S_1$.
			\begin{align*}
				\psi(\zeta, \bar\zeta + f(\zeta, \bar\zeta)) &= 
          \frac{1}{4} (\zeta^2 - (\bar\zeta + f(\zeta, \bar\zeta))^2) + 2\alpha ( |\zeta|^2 + \zeta f(\zeta, \bar\zeta) ) \\
					&= \frac{1}{4} (\zeta^2 - \bar\zeta^2 - f(\zeta, \bar\zeta)^2 - 2\bar \zeta f(\zeta, \bar\zeta) ) + 2\alpha |\zeta|^2 + 2\alpha \zeta f(\zeta, \bar\zeta).
			\end{align*}
      Taking the real part and applying triangle inequality, gives
			\begin{align*}
        \Re \psi(\zeta, \bar\zeta + f(\zeta, \bar\zeta))
				& \geq 2\alpha  |\zeta|^2  + \frac{1}{4} \Re (\zeta^2 - \bar\zeta^2) - \left| \frac{1}{4} ( - f(\zeta, \bar\zeta)^2 - 2\bar \zeta f(\zeta, \bar\zeta) ) + 2\alpha \zeta f(\zeta, \bar\zeta) \right| \\
				& \geq 2\alpha  |\zeta|^2  + 0 - \left| \frac{f(\zeta, \bar\zeta)^2}{4} \right|  
        - \left| \frac{\bar \zeta f(\zeta, \bar\zeta)}{2} \right| 
        - 2\alpha \left| \zeta f(\zeta, \bar\zeta) \right|  \\
				& \geq 2\alpha  |\zeta|^2 - \frac{1}{4} \alpha^2 |\zeta|^2 
        - \frac{1}{2} \alpha |\zeta|^2 
        - 2\alpha^2 | \zeta |^2
        \geq  \frac{3}{8} \alpha |\zeta|^2.
			\end{align*}
      Hence, $\Re \psi(S_1)$ is to the right of the y-axis and touches the y-axis only at zero.

    \item Similarly, a computation for $\Re \psi(S_2)$ shows that
			\begin{align*}
				&\psi(\zeta, -\zeta/\gamma - \bar\zeta + g(\zeta, \bar\zeta)) \\
				&= \frac{1}{4} \left( \zeta^2 - \left( \zeta^2/\gamma^2 + \bar \zeta^2 + g^2(\zeta, \bar\zeta) + 2\zeta\bar \zeta/\gamma - 2\bar \zeta g(\zeta, \bar\zeta) - 2\zeta g(\zeta, \bar\zeta)/\gamma \right) \right)
				\\
				& + 2\alpha \left( - \zeta^2/2 - \zeta \bar \zeta + \zeta g(\zeta, \bar\zeta) \right).
			\end{align*}
      Taking the real part, and applying the triangle inequality
			\begin{align*}
				&\Re \psi(\zeta, -\zeta/\gamma - \bar\zeta + g(\zeta, \bar\zeta))
				\\
				& \leq  
        \frac{1}{4} \left| \frac{\zeta^2}{\gamma^2} \right|
        + \frac{1}{4} \left| g^2(\zeta, \bar\zeta) \right|
				+ \frac{1}{2} \left| \bar \zeta g(\zeta, \bar\zeta) \right|
				\\
				&+ \frac{1}{2} \left| \frac{\zeta g(\zeta, \bar\zeta)}{\gamma} \right|
        + \frac{2\alpha}{\gamma} |\zeta^2 | + 2\alpha | \zeta g(\zeta, \bar\zeta) |
        - \left(\frac{1}{2\gamma} + 2\alpha \right)|\zeta|^2
				\\
				& \leq  
				 |\zeta|^2 \left(
           \frac{\alpha}{8} + \frac{\alpha}{2} + \alpha + 4\alpha + \alpha - 2\alpha
          \right)
          + |\zeta|^2 \left(\frac{1-2\gamma}{4\gamma^2} \right)
				\\
				&\leq  |\zeta|^2 \left( 5\alpha + \frac{1-2\gamma}{4\gamma^2} \right)
        = |\zeta|^2 \left( 5\alpha - 8\alpha \right)
        = - |\zeta|^2 \left( 3\alpha \right).
			\end{align*}
      Hence, $\Re \psi(S_2)$ is to the left of the y-axis and touches the y-axis only at zero.
  \end{itemize}

  Finally, $\psi^{-1}(0)\cap (S_1 \cup S_2) = \{(0,0)\} \in \C^2$: if $(\zeta_1, \zeta_2) \in S_1$ and $\psi(\zeta_1, \zeta_2)=0$ 
  then $\Re \psi(\zeta_1, \zeta_2) =0$ and hence, $(\zeta_1,\zeta_2)=(0,0)$. Similarly, $\psi|_{S_2}^{-1}(0) = \{(0,0)\}$.
  Hence, by Kallin's lemma, $S_1 \cup S_2$ is polynomially convex and satisfies $\mathscr{P}(S_1 \cup S_2) = \mathscr{C}(S_1 \cup S_2)$.

	As $P: \C^2 \to \C^2 $ was a proper map, we have $M$ is polynomially convex and satisfies
  $\mathscr{P}(M) = \mathscr{C}(M)$.

\end{proof}

\section{Flat hyperbolic point}
\label{section:flat-hyperbolic-point}

\subsection{The set of CR-singularities}

The manifold $M$ defined in equation (\ref{eqn_flat_normal_form})
matches locally at zero with the solution of the following equations in $\C^n$ with coordinates $(z_1 = x_1 + i y_1, \hdots, z_n = x_n + i y_n)$:
\begin{equation}
  \begin{split}
    r_1 &= y_1 - f_1(x_1) = 0, \\
    \vdots \\
    r_{n-2} &= y_{n-2} - f_{n-2}(x_1, \hdots, x_{n-2}) = 0, \\
    r_{n-1} &= x_n - (2\gamma+1) x_{n-1}^2 - (2\gamma -1) y_{n-1}^2 - \Re F(x_1, \hdots, x_{n-2}, x_{n-1}, y_{n-1})=0, \\ 
    r_{n} &= y_n - \Im F(x_1, \hdots, x_{n-2}, x_{n-1}, y_{n-1}) = 0,\\
  \end{split}
\end{equation}
where $R = (r_1, \hdots, r_n) : U\subset \C^{n} \to \R^n$ 
is smooth 
and $U$ is a neighbourhood of zero.

As shown in \textcite{webster1985}, consider the smooth function $B:M \to \C$ given by
$$
  \partial r_1 \wedge \hdots \wedge \partial r_n |_p = B(p)
  d z_1 \wedge \hdots \wedge d z_n |_p
$$
where $\partial = \sum_{j=1}^{n}  \frac{\partial}{\partial z_j} dz_j$. 
A point $p \in M$ has a complex tangent in $T_pM$ if and only if $B(p) = 0$.

We can compute $B$ at a point $z' = (x'_1 + i y'_1, \hdots,  x'_n + i y'_n) \in M \subset \C^n$ as
\[
  \begin{split}
    B(z') &= - \frac{1}{(2i)^{n-1}} \left((2\gamma+1) x_{n-1}' - i (2\gamma - 1) y_{n-1}' \right) + o(|z'|) \\
          &= - \frac{1}{(2i)^{n-1}} \left(2\gamma \bar z_{n-1}' + z_{n-1}' \right) + o(|z'|). 
  \end{split}
\]

From equation (\ref{eqn_flat_normal_form}),
consider the parametrization as a function $P: V \to \C$ where $V=[-T,T]^{n-2} \times \overline{D(r)}$.
The composition $B\circ P:V \to \C$ gives
\[
	B\circ P(t_1, \hdots, t_{n-2}, w, \bar w) = - \frac{1}{(2i)^{n-1}} \left(2\gamma \bar w + w \right) + O(2).
\]
Let $w = u+iv$, and compute the partial derivative \[
  \left. \frac{\partial^2 B}{\partial u \partial v} \right|_{0} = \frac{1}{(2i)^{2n-2}} ((2\gamma)^2 - 1) \neq 0 .
\]
By the Implicit Function theorem,
there is a smooth function $\eta: (-T,T)^{n-2} \to D(r)$ choosing $T,r$ smaller if necessary,
such that the zero set of $B\circ P$ is given by the $(n-2)$-parameter set
\[
  \{(t,\eta(t)) \in M : t\in [-T,T]^{k-2} \}.
\]

The set of CR-singularities for $M$ is given by the $(n-2)$-parameter set
\begin{equation}
  \{(t+i\tilde f(t), \eta(t), \phi(\eta(t))) \in M : t\in [-T,T]^{k-2} \},
\end{equation}
where $\phi(w) = w \bar w + \gamma ( w^2 + \bar w^2) + F(t_1, \hdots, t_{n-2},w, \bar w)$
and $\tilde f(t) = (f_1(t_1), \hdots, f_{n-2}(t_1, \hdots, t_{n-2}))$.

Observer that
\begin{equation}
  \label{eqn__del bar or phi}
	\left. \frac{\partial \phi}{\partial \bar w} \right|_{(t_1, \hdots, t_{n-2}, w)} =  (w + 2\gamma \bar w) + O(2) = - (2i)^{n-1} B\circ P(t_1, \hdots, t_{n-2}, w, \bar w) = 0
\end{equation}
Hence, at a point $p$ in $[B=0]$, we have $J P_{*} (\partial/\partial u) = P_{*} (\partial/\partial v)$ and hence, $P_{*} (\partial/\partial u) \in H_pM$, where $w=u+iv$.
It follows that the complex tangent is contained in the last two coordinates.
The dimension $\dim H_qM = 1$ 
for all $q$ in a neighbourhood of zero of $[B=0]$ because $0 \in M$ is a CR singularity of order one
and $\dim H_pM$ is an upper semi continuous function. Hence, $$\text{span}_\R\{P_{*} (\partial/\partial u), P_{*} (\partial/\partial v)\} = H_pM. $$

\subsection{Slices of $M$}
For $k\geq 2$ and $t \in [-T,T]^{n-k}$,
define $M^k_{t'_1, \hdots, t'_k}$ as follows:
\begin{equation}
  \label{eqn:slice of M}
  \begin{split}
  z_1 &= t'_1 +i f_1(t'_1), \\
  \vdots \\
  z_{n-k} &= t'_{n-k} +i f_{n-k}(t'_1, \hdots, t'_{n-k}), \\
  z_{n-k+1} &= t_{n-k+1} +i f_{n-k+1}(t'_1, \hdots, t'_{n-k}, t_{n-k+1}), \\
  \vdots \\
  z_{n-2} &= t_{n-2} +i f_{n-2}(t'_1, \hdots, t'_{n-k}, t_{n-k+1}, \hdots, t_{n-2}), \\
  z_{n-1} &= w, \\ 
  z_n &= w \bar w + \gamma ( w^2 + \bar w^2) + F(t'_1, \hdots, t'_{n-k}, t_{n-k+1}, \hdots, t_{n-2}, w, \bar w), 
  \end{split}
\end{equation}
where $t_j \in [-T, T]$ for $j=n-k+1,\hdots,n-2$; $w \in \overline{ D_r(0)}$.
Then $M^k_{t'_1, \hdots, t'_k}$ is a $k$-dimensional smooth submanifold of $M$.

\subsection{Normal form representation of two-dimensional slices}
\label{section:normal form of two-dimensional slices}
Fix $t\in[-T,T]^{n-2}$,
we will parametrize $M_{t}^2$ centered at CR singularity $(t+i\tilde f(t),\eta(t), \phi(\eta(t))$
as per the normal form of a two-dimensional manifold in \cite{bishop1965differentiable}.

Note that $M_{t}^2$ can be realized as a manifold in $\C^2$ as the first $n-2$ coordinates are fixed.
The point $(\eta(t), \phi(\eta(t)) \in M_{t}^2$ is a complex point as seen in equation (\ref{eqn__del bar or phi}).

Write the Taylor expansion of $\phi_t(w) := \phi(t,w)$
about the complex point $\eta(t)$. Set $\zeta = w - \eta(t)$.
$$
\phi_t(w) = \beta^t_{0,0} + \beta^t_{1,0} \zeta + \beta^t_{1,1} \zeta \bar \zeta
+ \beta^t_{2,0} \zeta^2 + \beta^t_{0,2} \bar \zeta^2 + G_t(\zeta),
$$
where $w \in B_r(0)$,
$$
\beta_{i,j}^t =\left. \frac{\partial^{i+j} \phi_t}{\partial w^i \partial \bar w^j} \right|_{w=\eta(t)},
$$
and $G_t:B_r(-\eta(t)) \to \C$ is smooth and vanishes to order three in $\zeta$.

Then $M_{t}^2$ is parametrized by
\begin{align*}
  z_2 &= \zeta + \eta(t), \\
  z_3 &= \beta^t_{0,0} + \beta^t_{1,0} \zeta + \beta^t_{1,1} \zeta \bar \zeta
+ \beta^t_{2,0} \zeta^2 + \beta^t_{0,2} \bar \zeta^2 + G_t(\zeta),
\end{align*}
where $\zeta \in B_r(-\eta(t))$.

Consider the following biholomorphic change of coordinates:
$z_2 \to z_2 - \eta(t) $ and $z_3  \to z_3 - \beta_{0,0} - \beta_{1,0} z_2$.
In the new coordinates $M_t^2$ is parametrized by:
\begin{align*}
  z_2 &= \zeta, \\
  z_3 &= \beta^t_{1,1} \zeta \bar \zeta
+ \beta^t_{2,0} \zeta^2 + \beta^t_{0,2} \bar \zeta^2 + G_t(\zeta).
\end{align*}
Followed by biholomorphic coordinate change
$z_3  \to \frac{z_3}{\beta^t_{1,1}}$ and
parameter change
$\zeta  \to  e^{i\theta} \zeta $ 
such that $(\beta^t_{0,2}/\beta^t_{1,1} ) e^{-i 2 \theta} > 0$.
Define $\gamma_t := (\beta^t_{0,2}/\beta^t_{1,1} ) e^{-i 2 \theta} = |\beta^t_{0,2}/\beta^t_{1,1}|$,
then another coordinate change 
$z_3 \to z_3 + (\gamma_t - \frac{\beta^t_{2,0}}{\beta^t_{1,1}}e^{i2\theta}) z_2^2$ gives:
\begin{align*}
  z_2 &= \zeta, \\
  z_3 &= \zeta \bar \zeta
  + \gamma_t ( \zeta^2 +  \bar \zeta^2 ) + \hat G_t(\zeta),
\end{align*}
where $\hat G_t(\zeta) = \frac{1}{\beta^t_{1,1}} G_t(e^{i\theta} \zeta)$
and $\zeta \in B_r(-e^{i\theta}\eta(t))$.
Here, $\hat G_t$  vanishes to order three in $\zeta$
and $\gamma_t = |\beta^t_{0,2}/\beta^t_{1,1}|$.

The $\mathscr{C}^2$-norm of $\hat G_t$ varies continuously with $t$ as
$$
G_t(\zeta) = 
\phi_t(w) - \beta^t_{0,0} - \beta^t_{1,0} \zeta - \beta^t_{1,1} \zeta \bar \zeta
- \beta^t_{2,0} \zeta^2 - \beta^t_{0,2} \bar \zeta^2 
$$
is a smooth function.

\subsection{Local polynomial convexity at a flat hyperbolic point}

\begin{proof}[Proof of Theorem \ref{thm_flat_CR_singularity}]

  Fix $t \in [-T,T]^{n-2}$.
  Consider the two-dimensional slice $M^2_t$ as shown in equation (\ref{eqn:slice of M}).

  As shown in Section \ref{section:normal form of two-dimensional slices},
  after a biholomorphic coordinate change we can represent this two-dimensional slice
  as
  \begin{align*}
    z_{n-1} &= \zeta, \\
    z_n &= \zeta \bar \zeta
    + \gamma_{t} ( \zeta^2 +  \bar \zeta^2 ) + \hat G_{t}(\zeta),
  \end{align*}
  where $\hat G_{t}$  vanishes to order three in $\zeta$. 

  Choose $r>0$ small enough such that
  $$
  \norm{\hat G_{0}}_{\mathscr{C}^2} \leq \frac{(2\gamma_{0} -1)^3}{2^{14} \gamma_{0}^3}.
  $$ We can do this since $\hat G_{0}$ vanishes to order three at zero.

  Notice that $G_{t}$ varies continuously with $t$.
  Choose $T>0$  small enough such that for $t\in [-T,T]^{n-2}$,
  $$
  \norm{\hat G_{t}}_{\mathscr{C}^2} \leq \frac{(2\gamma_{t} -1)^3}{2^{14} \gamma_{t}^3}.
  $$
	By Theorem \ref{thm_polynomial_convexity_two_dim_with_C2_condition},
  each two-dimensional slice $M^2_t$ produced by fixing
  $t \in [-T,T]^{n-2}$ is polynomially convex and satisfies
  $\mathscr{P}(M^2_t) = \mathscr{C}(M^2_t)$.
  Induction hypothesis: assume that for dimension $k \geq 2$ slices $M^k_{t'}$ are polynomially convex 
  for all $t'=(t'_1, \hdots, t'_{n-k}) \in [-T,T]^{n-k}$.
  We will show that $M^{k+1}_{t^o}$ is polynomially convex
  for all $t^o=(t'_1, \hdots, t'_{n-k-1}) \in [-T,T]^{n-k-1}$.

  Fix $t^o=(t'_1, \hdots, t'_{n-k-1}) \in [-T,T]^{n-k-1}$.
  The set $M^{k+1}_{t^o}$ is parametrized by

  \begin{equation}
    \begin{split}
    z_1 &= t'_1 +i f_1(t'_1), \\
    \vdots \\
    z_{n-k-1} &= t'_{n-k-1} +i f_{n-k}(t'_1, \hdots, t'_{n-k-1}), \\
    z_{n-k} &= t_{n-k} +i f_{n-k}(t'_1, \hdots, t'_{n-k-1}, t_{n-k}), \\
    z_{n-k+1} &= t_{n-k+1} +i f_{n-k+1}(t'_1, \hdots, t'_{n-k-1}, t_{n-k}, t_{n-k+1}), \\
    \vdots \\
    z_{n-2} &= t_{n-2} +i f_{n-2}(t'_1, \hdots, t'_{n-k}, t_{n-k+1}, \hdots, t_{n-2}), \\
    z_{n-1} &= w, \\ 
    z_n &= w \bar w + \gamma ( w^2 + \bar w^2) + F(t'_1, \hdots, t'_{n-k}, t_{n-k+1}, \hdots, t_{n-2}, w, \bar w). 
    \end{split}
  \end{equation}

  Consider the function $p_s : M^{k+1}_{t^o}  \to \C$ 
  defined by projection $z \mapsto z_{n-k}$.
  Let $A = p_{n-k}(M^{k+1}_{t^o})$. Then the fibers of $p_{n-k}$
  are given by
  $$
  p_{n-k}^{-1}(a) = M^{k}_{(t'_1, \hdots, t'_{n-k-1}, \Re a)}
  $$
  for $a\in A$. We know that $M^k_{(t'_1, \hdots, t'_{n-k-1}, \Re a)}$ is polynomially
  convex by the induction hypothesis.
  By Theorem \ref{thm_fibre_theorem}, we have $M^{k+1}_{t^o}$ is polynomially convex.

  Hence, the manifold $M^{n} = M$ is polynomially convex when $T,r$ are chosen sufficiently small.
  The induction also implies $\mathscr{C}(M) = \mathscr{P}(M)$.

\end{proof}

\newpage
\printbibliography
\end{document}